\newtheorem{theorem}{Theorem}[section]
\newtheorem{corollary}[theorem]{Corollary}
\newtheorem{definition}[theorem]{Definition}
\newtheorem{lemma}[theorem]{Lemma}
\newtheorem{proposition}[theorem]{Proposition}
\newtheorem{remark}[theorem]{Remark}
\newcommand{\N}{\mathbb{N}} 
\newcommand{\K}{\mathcal{K}}
\newcommand{\D}{{\mathbf D}}
\newcommand{\E}{{\mathbf E}}
\newcommand{\C}{C_{\E}}
\newcommand{\F}{\mathcal{F}}
\newcommand{\G}{\mathcal{G}}
\newcommand{\cR}{\mathcal{R}}
\newcommand{\bD}{\mathbf{D}}
\newcommand{\cS}{\mathcal{S}}
\newcommand{\cP}{\mathcal{P}}
\newcommand{\cQ}{\mathcal{Q}}
\newcommand{\dubCan}{2^{ \N \times \N}}
\newcommand{\lo}{{\bf LO^*}}
\newcommand{\wf}{{\bf WF^*}}
\newcommand{\leex}{{\le ^*_x}}
\newcommand{\lex}{{< ^*_x}}
\begin{document}
\title{Dynamical pair assignments}
\author{Udayan B. Darji
\date{}
\thanks {UBD acknowledges support as a Fulbright Distinguished Scholar of the Czech Fulbright Commission.} \and Felipe Garca-Ramos
\thanks{ FG  was partially supported by  
the Grant U1U/W16/NO/01.03 of the Strategic Excellent Initiative program of
the Jagiellonian University, and the grant K/NCN/000198 of the Narodowe Centrum Nauki (NCN).
}}
\date{\today}
\maketitle

\abstract{
Relations between points in the phase space are central to the study of topological dynamical systems. Since many of these relations share common properties, it is natural to study them within a unified framework. To this end, we introduce the concept of \textit{dynamical pair assignments} $\mathcal{P}$.

We then introduce the notions of a dynamical system being $\mathcal{P}$-full and $\mathcal{P}$-realizable, which generalize several existing concepts in the field like CPE, weak mixing and UPE. Our results establish that the space of $\mathcal{P}$-full systems is always a Borel set, while the space of $\mathcal{P}$-realizable systems is Borel if and only if an associated natural rank is bounded.
}
\tableofcontents

\section{Introduction}


Since the birth of the qualitative theory of dynamical systems, examining dynamical systems from a local perspective has been an integral aspect of the field. Of particular interest to this paper are the various definitions concerning the behavior of a pair of points relative to a dynamical system.

The concept of asymptotic (or homoclinic) pairs can be traced back to the seminal work of Poincaré and has played a significant role in understanding the chaotic properties of Hamiltonian and hyperbolic systems. Regionally proximal pairs, introduced by Veech \cite{veech1968equicontinuous}, serve to characterize the maximal equicontinuous factor of topological dynamical systems. Li-Yorke chaos, a well-known notion of chaos, is defined using scrambled pairs (also known as Li-Yorke pairs). Blanchard established that every dynamical system with positive entropy can be localized using the notion of entropy pairs, leading to the development of the field known as local entropy theory. 

This perspective has proven useful not only for understanding local behavior but also for illuminating connections to other dynamical objects such as the Ellis semigroup, factors, sequence entropy, mean dimension theory, spectrum, nilsystems, and cardinality of ergodic measures, in addition to the aforementioned notions of dynamical pairs. Moreover, it has facilitated novel interactions between dynamical systems and other areas of mathematics, including operator algebras, combinatorics, group theory, descriptive set theory, and continuum theory (see the survey \cite{garcia2024local}).

In this paper, we study from an abstract point of view a formal definition of dynamical pairs. In order to achieve this, we define \textit{ dynamical pair assignment} (Definition \ref{def:dynpair}), which encapsulates the basic properties of several definitions of natural pairs in a general setting.  

Given a dynamical pair assignment $\mathcal{P}$, we introduce the notions of a dynamical system being $\mathcal{P}$-full and $\mathcal{P}$-realizable (see Definition \ref{def:full}), which provide distinct interpretations of having pairs from $\mathcal{P}$ well distributed across the product space $X^2$. Previously established concepts, such as uniform positive entropy, completely positive entropy, weak mixing, and others, fit within these categories under the appropriate setup. 

We also consider a fixed compact metrizable space and study the family of dynamical systems that are $\mathcal{P}$-full and $\mathcal{P}$-realizable. We show that the class of $\mathcal{P}$-full systems is always Borel, which implies that, from a descriptive perspective, membership in this family is relatively straightforward to determine. In contrast, the class of $\mathcal{P}$-realizable systems is generally not Borel, but we provide a rank-based characterization that identifies conditions under which it is Borel.

Each dynamical pair assignment induces a $\Gamma$ rank on a fixed compact metrizable space via topological and transitive closure processes (see Section 3). Several of these hierarchical structures have already appeared in related work \cite{barbieri2020,salo2019entropy,westrick2019topological}. We show that if the rank induced by a dynamical pair assignment $\mathcal{P}$ is unbounded, then the class of $\mathcal{P}$-realizable systems will not be Borel.

To prove the result, we show that the $\Gamma$ rank is a coanalytic rank. Well-known ranks, such as the Cantor-Bendixson rank, can be proven to be coanalytic using a general framework developed by Kechris, which employs the concept of derivatives \cite[Theorem 34.10]{Kechris}. Although the $\Gamma$ rank does not fit precisely within the dual framework of Kechris' result, certain techniques from Kechris' approach can still be applied to demonstrate that the $\Gamma$ rank is indeed a coanalytic rank.

 Some of the results of this paper appeared in an unpublished note by the same authors \cite{darji2021note}.
 
\textbf{Acknowledgment:} The authors thank Dominik Kwietniak,
Slawomir Solecki, and Linda Westrick for motivating conversations. 

The research was finalized during the visit of the U.D. to Jagiellonian University funded by the Strategic Excellent Initiative program at Jagiellonian University

\section{Dynamical pair assignment}

We say $(X,T)$ is a \textbf{topological dynamical system (TDS)} if $X$ is a compact metrizable space and $T\colon X\rightarrow X$ is a continuous function.

When we say that $\varphi\colon (X,T)\to (X_1,T_1)$ is a \textbf{factor map} it means that $(X,T)$ and $(X_1,T_1)$ are TDSs, and $\varphi\colon X\rightarrow X_1$ is a continuous surjection such that $\varphi\circ T_{}=T_{1}\circ\varphi$.
In this situation, we say $(X_1,T_1)$ is a \textbf{factor} of the TDS $(X,T)$. 

 We denote by $C(X,Y)$ the set of all continuous functions from $X$ to $Y$ endowed with the uniform topology and by $K(X)$ the space of compact subsets of $X$ endowed with the Vietoris topology or, equivalently, the Hausdorff metric.

We introduce the following concept. 
\begin{definition}
\label{def:dynpair}
Let $\mathcal{P}$ be a collection of maps of the form \[
\mathcal{P}=\{\mathcal{P}_X|\text{ X is a compact metrizable space and }{\mathcal P}_X\colon C(X,X)\to K(X^2)\}.
\] 
We say that $\mathcal{P}$ is a \textbf{dynamical pair assignment} if the 
 following conditions hold for every $\cP_X\in \cP$:
 
\begin{enumerate}
    \item for every $T\in C(X,X)$, $\cP _X (T)$ is $T\times T$-invariant, 
    \item if $\varphi\colon (X,T)\to (X_1,T_1)$ is a factor map and $(x,y)\in \cP_X(T)$ then $(\varphi(x),\varphi (y))\in \cP_{X_1}(T_1)$, and 
    \item $\cP_X$ is a Borel map. 
\end{enumerate}

\end{definition}

In Section 4 we will see that regionally proximal pairs (Proposition \ref{prop:QisBorel}) as well as entropy pairs (Proposition \ref{prop:EisBorel}) generate dynamical pair assignments. 


\textbf{For the rest of the paper $\mathcal{P}$ represents a fixed  dynamical pair assignment.}

\begin{remark}
Note that there is a correspondence between compact metrizable spaces and $\cP_X\in \cP$.  
\end{remark}

Given a compact metrizable space $X$, we use $\Delta_X$ to denote the diagonal of $X$, namely, $\{(x,x): x \in X \}$. 

\begin{lemma}
\label{lem:onepairborel}
Let $\cP_X\in \cP$. The subset
\[
\{T\in C(X,X) : \cP_X(T)\setminus \Delta_X\neq \emptyset\}
 \]
 is Borel.
\end{lemma}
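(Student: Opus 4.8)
The plan is to exhibit the set in question as the preimage under the Borel map $\cP_X$ of an open subset of $K(X^2)$, so that Borelness follows immediately from condition (3) of Definition \ref{def:dynpair}. Since $X$ is metrizable and hence Hausdorff, the diagonal $\Delta_X$ is a closed subset of $X^2$, so its complement $U := X^2 \setminus \Delta_X$ is open. The key observation is that for a compact set $K \in K(X^2)$ one has $K \setminus \Delta_X \neq \emptyset$ if and only if $K \cap U \neq \emptyset$, so the condition defining our set can be rephrased purely in terms of the value $\cP_X(T) \in K(X^2)$ meeting the fixed open set $U$.

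Next I would introduce the ``hitting set''
\[
\mathcal{U} := \{K \in K(X^2) : K \cap U \neq \emptyset\}
\]
and recall that such a set is exactly one of the standard subbasic open sets of the Vietoris topology on $K(X^2)$ (the collection of compact sets that meet the open set $U$). Thus $\mathcal{U}$ is open, and in particular Borel, in $K(X^2)$. By the reformulation above, the set we wish to study is precisely
\[
\{T\in C(X,X) : \cP_X(T)\setminus \Delta_X\neq \emptyset\} = \cP_X^{-1}(\mathcal{U}).
\]

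Finally, since $\cP_X\colon C(X,X)\to K(X^2)$ is a Borel map by Definition \ref{def:dynpair}(3), the preimage $\cP_X^{-1}(\mathcal{U})$ of the Borel set $\mathcal{U}$ is Borel in $C(X,X)$, which is the desired conclusion. The only step requiring any care is the verification that $\mathcal{U}$ is open in the Vietoris topology, and this is a routine fact: it is immediate from the definition of the Vietoris topology that hitting sets of open sets are basic open, so I expect no genuine obstacle here. (One should also note that the argument is insensitive to the convention on whether $K(X^2)$ includes the empty set, since $\emptyset \setminus \Delta_X = \emptyset$ places the empty set outside both $\cP_X^{-1}(\mathcal{U})$ and the target set.)
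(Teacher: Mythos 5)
Your proof is correct and is essentially the paper's argument in complementary form: the paper notes that $K(\Delta_X)$ is closed in $K(X^2)$ and pulls it back under the Borel map $\cP_X$, while you pull back its complement, the Vietoris-open hitting set of $X^2\setminus\Delta_X$; these are the same observation. No issues.
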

\begin{proof}
    This simply follows from the fact that $K(\Delta_X)$ is a closed subset of $K(X^2)$ and $\cP_X$ is Borel.
\end{proof}

The following definitions are inspired by the concepts of uniform positive entropy, completely positive entropy, and a characterization of topological weak mixing (see Section \ref{sec:examples}). 

\begin{definition}
\label{def:full}
 We say a TDS $(X,T)$ is \textbf{$\mathcal{P}$-full} if $\cP_X(T)=X^2$. We say $(X,T)$ is \textbf{$\cP$-realizable} if the  smallest $T\times T$-invariant closed equivalence relation containing $\mathcal{P}_X(T)$ is $X^2$. 
\end{definition}

\begin{definition}
\label{def:R_p}
Let $\cP_X\in \cP$. We define the following subsets of $C(X,X)$:
\begin{gather*}
F({\mathcal{P}_X)
}=\{T\in C(X,X): (X,T)\text{ is $\mathcal{P}$-full}\}\text{, and}\\
R(\mathcal{P}_X)=\{T\in C(X,X): (X,T)\text{ is $\mathcal{P}$-realizable}\}.
\end{gather*}

\end{definition}

We are now interested in studying the descriptive complexity of $F(\mathcal{P}_X)
$ and $R(\mathcal{P}_X)$. One of the most basic questions from this point of view is if a set is Borel or not. 

 The following result follows directly from the fact that every $\cP_X\in \cP$ is a Borel function. 
\begin{proposition}
 Let $\cP_X\in \cP$. Then $F(\cP_X)$ is a Borel subset of $C(X,X)$. 
\end{proposition}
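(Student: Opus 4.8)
The plan is to recognize $F(\cP_X)$ as the preimage under $\cP_X$ of a single, manifestly Borel, point of the hyperspace $K(X^2)$, and then invoke that $\cP_X$ is Borel. Concretely, since $\cP_X(T)$ is by definition an element of $K(X^2)$ for every $T$, and since $X^2$ is itself a compact subset of $X^2$ and hence the (maximal) element of $K(X^2)$, the condition defining $\mathcal{P}$-fullness, namely $\cP_X(T)=X^2$, says precisely that $\cP_X(T)$ lands in the singleton $\{X^2\}\subseteq K(X^2)$. Thus
\[
F(\cP_X)=\cP_X^{-1}(\{X^2\}).
\]

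Next I would observe that $\{X^2\}$ is a Borel subset of $K(X^2)$. The hyperspace $K(X^2)$, carrying the Vietoris topology (equivalently the Hausdorff metric induced by a compatible metric on the compact metrizable space $X^2$), is itself metrizable; in any metrizable space singletons are closed, so $\{X^2\}$ is closed and a fortiori Borel. Since Definition \ref{def:dynpair}(3) guarantees that $\cP_X$ is a Borel map $C(X,X)\to K(X^2)$, the preimage of the Borel set $\{X^2\}$ is Borel, and the conclusion follows immediately.

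There is essentially no serious obstacle here: the only genuine content is the reduction of the equality $\cP_X(T)=X^2$ to membership in a point of the hyperspace, which parallels the argument of Lemma \ref{lem:onepairborel} (there one uses that $K(\Delta_X)$ is closed in $K(X^2)$; here the relevant Borel target is even simpler, being a single point). It is worth emphasizing, by contrast, that the definition of $R(\cP_X)$ involves the smallest $T\times T$-invariant closed equivalence relation \emph{containing} $\cP_X(T)$ rather than a plain equality of compact sets; this closure operation is precisely what prevents an analogous one-line argument and is the source of the potential non-Borelness of the realizability class analyzed later.
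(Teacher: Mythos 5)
Your argument is correct and is essentially identical to the paper's own proof, which also identifies $F(\cP_X)$ as the preimage under the Borel map $\cP_X$ of the single point $X\times X$ of the hyperspace $K(X^2)$. You merely spell out the (true and easy) fact that this singleton is closed, hence Borel, in the metrizable space $K(X^2)$.
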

\begin{proof}
    This follows from the fact that $\cP_X$ is a Borel map and $F(\mathcal{P}_X)= \cP^{-1}(X \times X)$.
\end{proof}

Studying $R(\mathcal{P}_X)$ is more delicate. We will first establish that it is a coanalytic subset and, in the following section, provide a characterization of the conditions under which it is Borel.

 \begin{lemma}
 \label{lem:factors}
     Let $(X_1,T_1)$ be a nontrivial factor of a $\cP$-realizable TDS $(X,T)$. We have $\cP_{X_1}(T_1)\setminus \Delta_{X_1}\neq \emptyset$. 
 \end{lemma}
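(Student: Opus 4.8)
The plan is to argue by contradiction. Fix a factor map $\varphi\colon (X,T)\to (X_1,T_1)$ and suppose, toward a contradiction, that $\cP_{X_1}(T_1)\setminus \Delta_{X_1}=\emptyset$, that is, $\cP_{X_1}(T_1)\subseteq \Delta_{X_1}$. The central object is the relation
\[
R_\varphi=\{(x,y)\in X^2 : \varphi(x)=\varphi(y)\}.
\]
First I would record the routine facts that $R_\varphi$ is a closed equivalence relation (closedness comes from continuity of $\varphi$ together with $X_1$ being Hausdorff, and the equivalence properties are inherited from equality on $X_1$), and that it is $T\times T$-invariant: if $\varphi(x)=\varphi(y)$ then $\varphi(Tx)=T_1\varphi(x)=T_1\varphi(y)=\varphi(Ty)$, using $\varphi\circ T=T_1\circ\varphi$.

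The crucial step is to show $\cP_X(T)\subseteq R_\varphi$, and this is precisely where the factor property (condition (2) of Definition \ref{def:dynpair}) does the work. Given any $(x,y)\in \cP_X(T)$, that property yields $(\varphi(x),\varphi(y))\in \cP_{X_1}(T_1)$, and the contradiction hypothesis then forces $(\varphi(x),\varphi(y))\in \Delta_{X_1}$, i.e. $\varphi(x)=\varphi(y)$, so $(x,y)\in R_\varphi$.

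Now $R_\varphi$ is a $T\times T$-invariant closed equivalence relation containing $\cP_X(T)$. Since $(X,T)$ is $\cP$-realizable, the smallest such relation is all of $X^2$; as $R_\varphi$ lies in this class, we conclude $R_\varphi=X^2$. But $R_\varphi=X^2$ says that $\varphi$ is constant, and since $\varphi$ is surjective this forces $X_1$ to be a single point, contradicting the nontriviality of the factor $(X_1,T_1)$. This contradiction establishes $\cP_{X_1}(T_1)\setminus \Delta_{X_1}\neq\emptyset$.

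I do not anticipate a genuine obstacle here: the argument is a direct combination of the factor property of $\cP$ with the minimality built into the definition of $\cP$-realizability. The only point that requires a little care is confirming that $R_\varphi$ really belongs to the class of relations over which the ``smallest'' relation in the definition of realizability is taken, namely that it is simultaneously closed, an equivalence relation, and $T\times T$-invariant; but each of these is immediate.
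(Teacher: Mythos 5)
Your proof is correct and follows essentially the same route as the paper's: both introduce the relation $R_\varphi=\{(x,y):\varphi(x)=\varphi(y)\}$, use condition (2) of Definition \ref{def:dynpair} to show $\cP_X(T)\subseteq R_\varphi$, and then invoke $\cP$-realizability to force $R_\varphi=X^2$, contradicting nontriviality of $X_1$. No meaningful differences.
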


\begin{proof} 
Let $(X,T)$ be a $\cP$-realizable TDS. Assume that we have a factor map $\pi\colon (X,T)\to (X_1,T_1)$ such that $\cP _{X_1} (T_1)\subset \Delta_{X_1}$. Let $R\subset X^2$ be the equivalence relation induced by the factor map $\pi$, that is, $(x,y)\in R$ if and only if $\pi(x)=\pi(y)$. One can check that $R$ is necessarily a closed $T\times T$-invariant equivalence relation.
 Furthermore, for every $(x,y)\in \cP_X(T)$, Definition \ref{def:dynpair} (2) implies $(\pi(x),\pi(y))\in \cP_{X_1}(T_1)$. As $\cP_{X_1} (T_1)\subset \Delta_{X_1}$, we get $\pi(x)=\pi(y)$, implying that $(x,y) \in R$. Thus, $\cP_X(T) \subset R$. As $(X,T)$ is $\cP$-realizable, then $R=X^2$ and $X_1$ are trivial.   
\end{proof}

Recall that a subset of a Polish space is \textbf{analytic} (or $\Sigma^1_1$) if it is the continuous image of a Borel set of a Polish space. Complements of the analytics subsets are called \textbf{coanalytic} (or $\Pi^1_1$). A subset is Borel if and only if it is analytic and coanalytic. 

\begin{proposition}
\label{prop:R_p}
    Let $\cP_X\in \cP$. We have that $R(\cP_X)$ is a coanalytic subset of $C(X,X)$. 
\end{proposition}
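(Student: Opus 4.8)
The plan is to exhibit the complement $C(X,X)\setminus R(\cP_X)$ as the projection of a Borel set, which is automatically analytic, and conclude that $R(\cP_X)$ is coanalytic. First I would unwind the definition. Since an arbitrary intersection of closed $T\times T$-invariant equivalence relations is again one, and $X^2$ is always such a relation, the smallest closed $T\times T$-invariant equivalence relation $E_T$ containing $\cP_X(T)$ exists. Moreover $E_T=X^2$ fails precisely when \emph{some} proper closed $T\times T$-invariant equivalence relation contains $\cP_X(T)$ (this is the relation-theoretic form of Lemma \ref{lem:factors}, read through the correspondence between such relations and factors, and is immediate from minimality of $E_T$). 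Hence
\[
T\notin R(\cP_X)\iff \exists E\in K(X^2):\ E\text{ is a }T\times T\text{-invariant closed equivalence relation},\ \cP_X(T)\subseteq E,\ E\neq X^2.
\]

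Let $A\subseteq C(X,X)\times K(X^2)$ be the set of pairs $(T,E)$ satisfying the bracketed conditions on the right, so that $C(X,X)\setminus R(\cP_X)$ is the first-coordinate projection of $A$; it thus suffices to prove $A$ is Borel. I would check the defining conditions one at a time. The condition $E\neq X^2$ is open. The condition $\cP_X(T)\subseteq E$ is Borel because $\cP_X$ is a Borel map and set-containment $\{(K,L):K\subseteq L\}$ is closed in $K(X^2)\times K(X^2)$. Invariance, $(T\times T)(E)\subseteq E$, is closed: the map $(T,E)\mapsto (T\times T)(E)$ is continuous, being the composite of the continuous map $T\mapsto T\times T$ with the jointly continuous image map $(f,K)\mapsto f(K)$, and containment is closed. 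Reflexivity ($\Delta_X\subseteq E$) and symmetry ($E=E^{-1}$) are likewise Borel, using that the membership relation $\{(p,K):p\in K\}$ is closed in $X^2\times K(X^2)$ and that $E\mapsto E^{-1}$ is a homeomorphism of $K(X^2)$.

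The main obstacle is transitivity, $E\circ E\subseteq E$, since a naive formulation carries a universal quantifier over points and would only yield a coanalytic constraint on $E$ — insufficient here, because the projection of a coanalytic set need not be analytic. To avoid this I would show directly that the composition map $E\mapsto E\circ E$ is Borel. Writing $\iota\colon X^3\to X^2\times X^2$, $(x,y,z)\mapsto((x,y),(y,z))$, a homeomorphism onto the closed set $\mathrm{Im}(\iota)$, and $p_{13}\colon X^3\to X^2$, $(x,y,z)\mapsto(x,z)$, one has $E\circ E=p_{13}\big(\iota^{-1}(E\times E)\big)$. Here $E\mapsto E\times E$ and $K\mapsto p_{13}(K)$ are continuous, so the only delicate point is that $K\mapsto \iota^{-1}(K)$, which amounts to intersecting with the fixed closed set $\mathrm{Im}(\iota)$, is Borel; this I would verify by computing preimages of the Vietoris subbasic sets $\{K:K\subseteq U\}$ and $\{K:K\cap V\neq\emptyset\}$, using that $\mathrm{Im}(\iota)\cap V$ is $F_\sigma$ and $\mathrm{Im}(\iota)\setminus U$ is closed. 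Once $E\mapsto E\circ E$ is Borel, transitivity is the preimage of the closed containment relation under the Borel map $E\mapsto(E\circ E,E)$, hence Borel. Assembling these conditions shows $A$ is Borel, its first-coordinate projection is analytic, and therefore $R(\cP_X)$ is coanalytic.
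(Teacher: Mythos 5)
Your proof is correct, but it takes a genuinely different route from the paper's. The paper encodes failure of $\cP$-realizability through factor maps: it forms a Borel set $\mathcal{A}$ of triples $(T,\varphi,S)$, where $\varphi\colon X\to I^{\N}$ induces a nontrivial factor on which $\cP$ is contained in the diagonal, and uses Lemma \ref{lem:factors} to identify the projection of $\mathcal{A}$ onto the first coordinate with the complement of $R(\cP_X)$. You instead quantify directly over the witnesses implicit in Definition \ref{def:full}: proper closed $T\times T$-invariant equivalence relations $E\in K(X^2)$ containing $\cP_X(T)$. The equivalence you use --- $T\notin R(\cP_X)$ iff such an $E$ exists --- is immediate from minimality of the generated relation, so your argument needs neither axiom (2) of Definition \ref{def:dynpair} nor Lemma \ref{lem:factors}; it also avoids the converse inclusion $C(X,X)\setminus R(\cP_X)\subseteq\Pi_1(\mathcal{A})$, which in the paper's version tacitly requires that the quotient by the minimal relation carries no nontrivial $\cP$-pairs, a fact not directly supplied by the stated axioms. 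The price you pay is verifying that ``$E$ is a closed invariant equivalence relation'' is a Borel condition on $E\in K(X^2)$, and you correctly isolate transitivity as the only delicate point, handling it by showing $E\mapsto E\circ E$ is Borel via $E\circ E=p_{13}\bigl(\iota^{-1}(E\times E)\bigr)$ rather than writing transitivity with a universal point-quantifier (which, as you observe, would only yield a coanalytic condition and break the projection argument). The individual Borelness claims you invoke --- joint continuity of $(f,K)\mapsto f(K)$, closedness of the containment relation in the Vietoris topology, and Borelness of $K\mapsto K\cap F$ for fixed closed $F$ --- are all standard and correctly deployed, so the argument stands as a self-contained alternative proof.
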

\begin{proof}
Let $I^{\N}$ be the Hilbert cube and consider
\begin{equation*}
\begin{split}
  {\mathcal A} = & \{(T, \varphi, S): T \in C(X,X), \ \varphi \in C(X,I^{\N}), \ |\varphi (T)| >1, \ \\&  S \in C  (\varphi(T), \varphi(T) ), \  \varphi \circ T = S \circ \varphi  \textit { and } \cP(S) =\Delta_{\varphi(T)}
 \}.  
\end{split}
\end{equation*}

It is easy to see that all except the last condition in the definition of ${\mathcal A}$ give rise to a Borel subset. To show that the last condition ($\cP(S) =\Delta_{\varphi(T)}$) also gives a Borel subset, use Lemma \ref{lem:onepairborel}; thus ${\mathcal A}$ is Borel.
Hence, $\Pi_1({\mathcal A})$,  its projection onto the first coordinate, is analytic. By Lemma \ref{lem:factors}, $\Pi_1({\mathcal A})$  is precisely the set of all $T \in C(X,X)$
 which do not belong to $R(\cP _X)$. Hence, $R(\cP _X)$, the complement of $\Pi_1({\mathcal A})$, is coanalytic.
\end{proof}


\section{$\Gamma$ rank and $\cP$ rank}
\label{sec:gamma}

As usual in descriptive set theory we will use $\N$ to denote the non-negative integers, and make a correspondence between ordinals and subsets of ordinals (e.g. $\N=\omega$). 

To prove that a TDS $(X,T)$ is $\cP$-realizable, one needs to be able to construct the smallest closed equivalence relation that contains $\cP_X(T)$. This set can be constructed with transfinite induction using the $\Gamma$ rank defined as follows. 

Let $Z$ be a set, and $F\subseteq Z^2$. Define the \textbf{transitive closure} of $F$ as
\[
F^{+}=\{(u,v)\in Z^2:\exists u=u_1,...,u_n=v \text{ s.t. } (u_i,u_{i+1})\in F\}.
\] 
A set $F\subseteq Z^{2}$ is an equivalence relation if and only if $F$ is symmetric,
$\Delta_{Z}\subseteq F$, and $F=F^{+}$. In this case, denote the equivalence
class of $z\in Z$ by $[z]_{F}$.

Let $E\subseteq X^{2}$ be a symmetric set. Define
\[
\Gamma(E)=\overline{E^{+}\cup \Delta_X}.
\]
For a countable ordinal $\alpha$, $\Gamma^{\alpha}(E)$ is defined by 
\[
\Gamma^{\alpha}(E) = \Gamma( \Gamma^{\alpha -1}(E )) 
\]
if $\alpha$ is the successor ordinal and 
\[
\Gamma^{\alpha}(E) = \overline {\cup_{\beta < \alpha} \Gamma^{\beta}(E)}
\]
if $\alpha$ is a limit ordinal.  

Recall that in a compact metrizable space, a chain of strictly increasing sequence of closed sets must be countable. Therefore, the following proposition holds.
\begin{proposition}
Let $X$ be a compact metrizable space.
There exists a countable ordinal $\alpha$ such that $\Gamma^{\alpha}%
(E)=\Gamma^{\alpha+1}(E)$.
\end{proposition}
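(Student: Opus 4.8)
The plan is to reduce the statement to the recalled fact that a strictly increasing chain of closed subsets of a compact metrizable space must be countable, applied to the compact metrizable space $X^2$. Two structural properties of the operator $\Gamma$ drive everything. First, $\Gamma$ is \emph{extensive}: for any symmetric $A\subseteq X^2$ we have $A\subseteq\Gamma(A)$, since $A\subseteq A^{+}\subseteq A^{+}\cup\Delta_X\subseteq\overline{A^{+}\cup\Delta_X}=\Gamma(A)$ (take the length-two chain $u_1=u,u_2=v$ to see $(u,v)\in A$ implies $(u,v)\in A^{+}$). Second, $\Gamma$ is \emph{monotone}: if $A\subseteq B$ then every $A$-chain is a $B$-chain, so $A^{+}\subseteq B^{+}$, and adding $\Delta_X$ and taking closures yields $\Gamma(A)\subseteq\Gamma(B)$. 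I will also record that each $\Gamma^{\alpha}(E)$ is by construction a closed subset of $X^2$, since the successor stage applies a closure and the limit stage is itself an explicit closure.

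Using these properties I would first show by transfinite induction that the sequence $(\Gamma^{\alpha}(E))_{\alpha}$ is weakly increasing, i.e.\ $\Gamma^{\beta}(E)\subseteq\Gamma^{\gamma}(E)$ whenever $\beta\le\gamma$. The successor step is immediate from extensivity, as $\Gamma^{\alpha+1}(E)=\Gamma(\Gamma^{\alpha}(E))\supseteq\Gamma^{\alpha}(E)$, and the limit step is immediate from the definition $\Gamma^{\lambda}(E)=\overline{\bigcup_{\beta<\lambda}\Gamma^{\beta}(E)}$, which contains every earlier term. Monotonicity of $\Gamma$ then propagates these inclusions through the induction.

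Next I argue by contradiction. Suppose no countable $\alpha$ satisfies $\Gamma^{\alpha}(E)=\Gamma^{\alpha+1}(E)$. Since $\Gamma^{\alpha}(E)\subseteq\Gamma^{\alpha+1}(E)$ always holds, the failure of equality means $\Gamma^{\alpha}(E)\subsetneq\Gamma^{\alpha+1}(E)$ for every countable $\alpha$. Combined with weak monotonicity, this forces the terms to be pairwise distinct over all $\alpha<\omega_1$: for $\beta<\gamma$ one has $\Gamma^{\beta}(E)\subsetneq\Gamma^{\beta+1}(E)\subseteq\Gamma^{\gamma}(E)$. Thus $(\Gamma^{\alpha}(E))_{\alpha<\omega_1}$ is a strictly increasing chain of closed subsets of the compact metrizable space $X^2$ of length $\omega_1$, contradicting the recalled fact that such chains must be countable. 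Hence some countable $\alpha$ stabilizes the sequence, which is exactly the assertion.

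The substantive content is entirely the recalled countability fact (equivalently, second countability of $X^2$ together with the standard injection of the strict-increase steps into a countable basis); once it is granted, the argument is a short piece of ordinal bookkeeping. The only points demanding a little care are the limit stage of the monotonicity induction and the passage from \emph{strict increase at every successor} to \emph{pairwise distinct along the whole chain}, but neither presents a genuine obstacle.
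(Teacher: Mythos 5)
Your argument is correct and is exactly the paper's (essentially one-line) justification: the paper simply recalls that a strictly increasing chain of closed sets in a compact metrizable space must be countable and concludes the proposition from that, which is the same reduction you carry out, just with the extensivity/monotonicity bookkeeping made explicit. No discrepancy to report.
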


 The smallest such ordinal is called the $\Gamma$ \textbf{rank of} $E$. Define $\Gamma^{\infty} (E)=\Gamma^{\alpha} (E)$, where $\alpha$ is the rank of $E$. 

\begin{definition}
    Let $\cP_X\in \cP$ and $T\in C(X,X)$. We define the $\cP$\textbf{ rank} of $T$ as the $\Gamma$ rank of $\cP_X(T)$. 
    
    We say that the $\cP$ rank is \textbf{bounded on} $X$, if there exists a countable ordinal $\alpha$ such that for every $T\in C(X,X)$, the $\cP$ rank of $T$ is bounded by $\alpha$. 
\end{definition}

In Section 5 we will prove that the $\cP$ rank is a coanalytic rank (Theorem \ref{mainpi11}) and obtain the following consequence.

\begin{theorem}\label{thm:GammaBorel}
    We have that $R(\cP_X)$ is a Borel subset of $C(X,X)$ if and only if the $\cP$ rank is bounded on $X$. 
\end{theorem}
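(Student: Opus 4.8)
The plan is to read Theorem \ref{thm:GammaBorel} as an instance of the classical boundedness phenomenon for coanalytic ranks. By Proposition \ref{prop:R_p} the set $R(\cP_X)$ is coanalytic, and by Theorem \ref{mainpi11} the $\cP$ rank is a coanalytic ($\Pi^1_1$-)rank on it. The engine of the argument is then the boundedness theorem for $\Pi^1_1$-ranks (in the style of Kechris): a coanalytic set carrying a coanalytic rank is Borel precisely when that rank is bounded on it by a countable ordinal. The two tasks are to make this theorem do the work in each direction, and to reconcile boundedness of the rank on $R(\cP_X)$ with boundedness over the whole of $C(X,X)$, which is how the statement is phrased.

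For the ``if'' direction I would argue directly, without invoking coanalytic ranks. Suppose the $\cP$ rank is bounded on $X$ by a countable ordinal $\alpha$. Then $\Gamma^{\alpha}(\cP_X(T)) = \Gamma^{\infty}(\cP_X(T))$ for every $T$, so $(X,T)$ is $\cP$-realizable if and only if $\Gamma^{\alpha}(\cP_X(T)) = X^2$; hence
\[
R(\cP_X) = \{T \in C(X,X) : \Gamma^{\alpha}(\cP_X(T)) = X^2\}.
\]
Since $\{X^2\}$ is closed in $K(X^2)$, it suffices to show that $T \mapsto \Gamma^{\alpha}(\cP_X(T))$ is a Borel map $C(X,X)\to K(X^2)$. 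As $\cP_X$ is Borel by Definition \ref{def:dynpair}(3), this reduces to showing that each transfinite iterate $\Gamma^{\beta}$, $\beta\le\alpha$, is a Borel operator on $K(X^2)$, which I would establish by transfinite induction: the one-step operator $E\mapsto \overline{E^{+}\cup\Delta_X}$ is Borel (the transitive-closure-then-Hausdorff-closure operation is Borel on $K(X^2)$), the successor step is a composition of Borel maps, and the limit step $E\mapsto \overline{\bigcup_{\beta<\lambda}\Gamma^{\beta}(E)}$ is Borel because the countable-union-and-closure map $K(X^2)^{\N}\to K(X^2)$ is Borel.

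For the ``only if'' direction, suppose $R(\cP_X)$ is Borel, hence analytic. Applying the boundedness theorem to the coanalytic rank of Theorem \ref{mainpi11}, with the analytic set taken to be $R(\cP_X)$ itself, yields a countable ordinal $\alpha_0$ such that the $\cP$ rank of every $\cP$-realizable $T$ is at most $\alpha_0$; that is, the rank is bounded on $R(\cP_X)$.

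The step I expect to be the main obstacle is upgrading boundedness on $R(\cP_X)$ to boundedness on all of $C(X,X)$, since ``bounded on $X$'' quantifies over every $T\in C(X,X)$, including non-realizable ones. Equivalently, one must show that if realizable systems have uniformly bounded rank then so do all systems, i.e. rule out non-realizable systems of arbitrarily large $\cP$ rank. This is genuinely delicate: the $\Gamma$ rank is not monotone in $E$ (taking $E\subseteq \Gamma^{\infty}(E)=E'$ can collapse the rank to $0$), so one cannot simply enlarge $\cP_X(T)$ to reach $X^2$ without disturbing the rank, and in any case $\cP_X(T)$ is determined by $T$ rather than freely choosable. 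The route I would pursue is to use that, for a non-realizable $T$, the whole closure process is confined to the proper closed $T\times T$-invariant equivalence relation $\Gamma^{\infty}(\cP_X(T))\subsetneq X^2$ (the transitive closure preserves its classes and the diagonal already lies inside it), so the $\cP$ rank of $T$ is an internal invariant of that relation, and then to manufacture from such a configuration realizable systems of comparable rank. Note that the factor axiom of Definition \ref{def:dynpair}(2) alone is insufficient, since quotienting by $\Gamma^{\infty}(\cP_X(T))$ destroys exactly the rank information; identifying the correct comparison with genuine realizable systems is, I expect, where the real work lies.
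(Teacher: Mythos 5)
Your core argument is the same as the paper's: Proposition \ref{prop:R_p} gives coanalyticity, Theorem \ref{mainpi11} gives a coanalytic rank on $R(\cP_X)$, and the Boundedness Theorem (Theorem \ref{thm:bound}) converts Borelness into boundedness of that rank. In fact the paper's entire proof is the single line ``combine Theorem \ref{thm:bound} and Theorem \ref{mainpi11}'', so it establishes exactly what your ``only if'' direction establishes: $R(\cP_X)$ is Borel if and only if $\varphi_{\cP}$ is bounded on $R(\cP_X)$. The obstacle you single out in your last paragraph --- upgrading boundedness on $R(\cP_X)$ to boundedness on all of $C(X,X)$, which is how ``bounded on $X$'' is literally defined --- is not addressed in the paper at all; the statement is implicitly being read as boundedness over the realizable maps. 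You are right that the two readings are not obviously equivalent, and your own ``if''-direction argument shows why the literal reading is the problematic one: boundedness of the rank on $R(\cP_X)$ alone already yields
\[
R(\cP_X)=\{T\in C(X,X): \Gamma^{\alpha}(\cP_X(T))=X^2\},
\]
since the iterates $\Gamma^{\beta}$ increase, so $\Gamma^{\alpha}(\cP_X(T))=X^2$ forces realizability; hence $R(\cP_X)$ would be Borel even if non-realizable maps had unbounded rank, and the literal ``only if'' direction would then fail. So treat the discrepancy as an imprecision in the statement to be repaired (replace ``bounded on $X$'' by ``bounded on $R(\cP_X)$''), not as a missing step you must supply; the ``real work'' you anticipated is neither needed for the corrected statement nor carried out in the paper.

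One further remark: your direct proof of the ``if'' direction --- a transfinite induction showing each iterate $\Gamma^{\beta}$ is a Borel operator, using Proposition \ref{GammaBorel} for the one-step map and Lemma \ref{BorelLimit} at limit stages --- is correct and is a nice self-contained alternative to routing that direction through the definition of a coanalytic rank; as noted above, it only requires boundedness on $R(\cP_X)$, which is exactly what makes the equivalence close up.
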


\section{Examples}
 \label{sec:examples}

\subsection{Entropy pairs}


Let $(X,T)$ be a TDS and $\mathcal{U},\mathcal{V}$ open covers of $X$. We denote the smallest cardinality of a subcover of $\mathcal{U}$ with $N(\mathcal{U})$, and we define
\[
\mathcal{U}\vee\mathcal{V}=\{U\cap V:U\in\mathcal{U}\text{ and }V\in\mathcal{V}\}.
\] We define the \textbf{entropy of
$(X,T)$ with respect to $\mathcal{U}$} as
\[
h_{\text{top}}(X,T,\mathcal{U})=\lim_{n\rightarrow\infty}\frac{1}{n }\log N(\vee^n_{m=1}T^{-m}(\mathcal{U})).
\]
The \textbf{(topological) entropy} of $(X,T)$ is defined as \[
h_{\text{top}}(X,T)=\sup_{\mathcal{U}}h_{\text{top}}(X, T,\mathcal{U}).
\]

We will now define common concepts from local entropy theory. 
The local entropy theory was initiated in \cite{blanchard93}. For more information, see the surveys \cite{garcia2024local,glasner2009local}. 

\begin{definition}
A TDS has \textbf{complete positive entropy (CPE)}  if every non-trivial factor has positive entropy. 

A TDS $(X,T)$ has \textbf{uniform positive entropy (CPE)} if for every open cover $\mathcal{U}$ composed of two non-dense open sets we have that \[h_{\text{top}}(X,T,\mathcal{U})>0.\]
\end{definition}


 A set $I \subseteq \N$ has \textbf{positive density} if
$\liminf_n \frac{|I \cap [1,n]|}{n} >0$.

 Given a TDS $(X,T)$ and $\{U,V\}\subset X$, we say $I \subset \N$ is an \textbf{independence set for $\{U,V\}$} if for all finite $J \subseteq I$,
and for all $(Y_j) \in \prod _{j\in J}{\mathcal A}$, we have that
$$\cap_{j\in J}T^{-j}(Y_j)\neq \emptyset.$$

\begin{definition}
Let $(X,T)$ be a TDS. We say that $(x_1, x_2) \in X\times X$ is an \textbf{independence entropy pair (IE-pair) of $(X,T)$} if for every pair of open sets $A_1,A_2$, with $x_1\in A_1$ and $x_2\in A_2$, there exists an independence set for $\{A_1,
A_2\}$ with positive density. The set of IE-pairs of $(X,T)$ will be denoted by $E(X,T)$.
\end{definition}

For the proof of the following results, see \cite[Theorem 12.19]{kerr2016ergodic} and \cite[Proposition 3.9 and
Theorem 3.15]{KerrLiMA}. Also see \cite{huang2007relative}. 
\begin{theorem}
  \label{thm:epair}
Let $(X,T)$ be a TDS.

\begin{enumerate}
    \item $E(X,T)$ is a closed $T\times T$-invariant set. 
    \item $E(X,T)\setminus \Delta_X\neq \emptyset$ if and only if $h_{\text{top}}(X,T)>0$. 

    \item If $\varphi\colon (X,T)\to (X_1,T_1)$ is a factor map and $(x,y)\in E(X,T)$ then $(\varphi(x),\varphi (y))\in E(X_1,T_1)$.
\end{enumerate}

\end{theorem}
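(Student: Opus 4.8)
The plan is to establish the three items separately. Items (1) and (3) are soft and follow directly from unwinding the definition of an independence set, whereas item (2) --- the equivalence of a nondiagonal IE-pair with positivity of entropy --- is the substantive content and is where I expect the real work to lie. I would prove (1) and (3) in full and reduce (2) to a single combinatorial input.

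For item (1), I would first show that the complement of $E(X,T)$ is open: if $(x_1,x_2)$ is not an IE-pair, fix witnessing open sets $A_1\ni x_1$ and $A_2\ni x_2$ admitting no positive-density independence set; then every point of the open box $A_1\times A_2$ fails to be an IE-pair with the same witnesses, so $E(X,T)$ is closed. For $T\times T$-invariance, given open sets $A_1\ni Tx_1$ and $A_2\ni Tx_2$, continuity gives open sets $T^{-1}(A_i)\ni x_i$; a positive-density independence set $I$ for $\{T^{-1}(A_1),T^{-1}(A_2)\}$ then yields the translate $I+1$, which is an independence set for $\{A_1,A_2\}$ of the same (hence positive) density, since $T^{-(j+1)}=T^{-j}\circ T^{-1}$ turns each admissible intersection for the shifted window into an admissible intersection for $I$.

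For item (3), given open sets $B_1\ni\varphi(x)$ and $B_2\ni\varphi(y)$ in $X_1$, I would pull them back to the open sets $\varphi^{-1}(B_i)$ containing $x,y$ in $X$. Using the intertwining relation $\varphi\circ T=T_1\circ\varphi$, equivalently $\varphi^{-1}\circ T_1^{-j}=T^{-j}\circ\varphi^{-1}$, one gets $\varphi^{-1}\!\left(\bigcap_j T_1^{-j}(Z_j)\right)=\bigcap_j T^{-j}(\varphi^{-1}(Z_j))$. Hence a positive-density independence set for $\{\varphi^{-1}(B_1),\varphi^{-1}(B_2)\}$ in $X$ (which exists because $(x,y)\in E(X,T)$) is automatically a positive-density independence set for $\{B_1,B_2\}$ in $X_1$: nonemptiness of the preimage forces nonemptiness of the intersection downstairs. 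Thus $(\varphi(x),\varphi(y))\in E(X_1,T_1)$.

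For item (2) the easy direction is: a nondiagonal IE-pair $(x_1,x_2)$ admits witnesses $A_1,A_2$ with disjoint closures and a positive-density independence set $I$; for a window $J=I\cap[1,n]$ and each subset $S\subseteq J$, independence produces a point whose orbit lies in $A_1$ at the times in $S$ and in $A_2$ at the times in $J\setminus S$, so these $2^{|J|}$ orbit segments are pairwise $(n,\varepsilon)$-separated with $\varepsilon=\mathrm{dist}(\overline{A_1},\overline{A_2})>0$. Since $|J|\ge cn$ for large $n$ by positive density, the exponential growth of separated segments gives $h_{\text{top}}(X,T)>0$, via the standard equivalence of the cover definition with separated sets. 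The hard direction --- positive entropy forces a nondiagonal IE-pair --- is the main obstacle, and is exactly the place where the combinatorial independence theory of Kerr and Li enters. Here the scheme is: positive entropy makes some finite open cover realize exponentially many distinguishable $n$-itineraries; a Sauer--Shelah / Karpovsky--Milman counting lemma converts this exponential growth into a set of times of positive density on which the system is independent for a fixed pair of disjoint open sets; and a compactness argument in $K(X^2)$, extracting a cluster point of a shrinking net of neighborhood pairs, produces an off-diagonal pair all of whose neighborhoods carry positive-density independence sets, i.e. a nondiagonal IE-pair. I would cite Kerr--Li for the Sauer--Shelah input, since reproving that lemma is lengthy, and carry out the remaining steps directly.
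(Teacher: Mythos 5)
The paper does not prove this theorem at all: it is stated as a known result and attributed wholesale to Kerr--Li (\cite[Proposition 3.9, Theorem 3.15]{KerrLiMA} and \cite[Theorem 12.19]{kerr2016ergodic}). Your proposal therefore does strictly more than the paper: your arguments for item (1) (closedness via open witnesses on the complement, invariance via pulling back neighborhoods and translating the independence set by $1$, which preserves density) and for item (3) (the identity $\varphi^{-1}\bigl(\bigcap_j T_1^{-j}(Z_j)\bigr)=\bigcap_j T^{-j}(\varphi^{-1}(Z_j))$ from the intertwining relation) are correct and complete, as is the forward direction of (2) via $(n,\varepsilon)$-separated sets indexed by subsets of $I\cap[1,n]$. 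What each approach buys is clear: the paper keeps the exposition short by outsourcing everything; you make the soft two-thirds of the statement self-contained at the cost of a page, which is arguably more honest since (1) and (3) are exactly the properties needed for $\mathcal{E}$ to be a dynamical pair assignment.

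One caveat on your sketch of the hard direction of (2). The step you describe as ``a compactness argument in $K(X^2)$, extracting a cluster point of a shrinking net of neighborhood pairs'' undersells what is needed: to pass from a single pair of disjoint closed sets with positive independence density to an actual point pair $(x_1,x_2)$ every neighborhood pair of which has positive independence density, one must know that positive independence density survives subdividing one of the two sets into finitely many pieces (so that a nested sequence of shrinking pieces can be chosen). That subdivision property is Kerr--Li's Lemma 3.4 and is itself a nontrivial combinatorial fact, not a consequence of compactness alone; it is a second place, beyond the Sauer--Shelah/Karpovsky--Milman counting lemma, where you must cite their machinery. Since you are already citing Kerr--Li for the combinatorial core, this does not invalidate your plan, but the phrase ``carry out the remaining steps directly'' should be tempered accordingly.
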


\begin{definition}
   We define the \textbf{entropy pair assignment} as the collection of maps
\begin{align*}
\mathcal{E} = \left\{ \mathcal{E}_X \mid \text{X is a compact metric space, } \right. & \\
\left. {\mathcal{E}}_X \colon C(X,X) \to K(X^2) \text{, and } \mathcal{E}_X(T) = E(X,T) \right\}.
\end{align*}
\end{definition}


\begin{proposition}\label{prop:EisBorel}
    We have that $\mathcal{E}$ is a dynamical pair assignment.
    \end{proposition}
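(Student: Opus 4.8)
The plan is to verify the three conditions of Definition \ref{def:dynpair} for $\mathcal{E}$. Conditions (1) and (2) are immediate from Theorem \ref{thm:epair}: part (1) says each $E(X,T)=\mathcal{E}_X(T)$ is closed and $T\times T$-invariant, and part (3) is exactly condition (2). The entire content is therefore condition (3), Borel measurability of $\mathcal{E}_X\colon C(X,X)\to K(X^2)$. Since $X^2$ is compact metrizable, $K(X^2)$ is compact metrizable and its Borel $\sigma$-algebra is generated by the hitting sets $\{\mathcal{K}\in K(X^2):\mathcal{K}\cap W\neq\emptyset\}$ as $W$ ranges over open subsets of $X^2$. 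Hence $\mathcal{E}_X$ is Borel provided $\{T:E(X,T)\cap W\neq\emptyset\}$ is Borel for every open $W$; writing a general open $W$ as a countable union of basic boxes reduces this to $W=U_1\times U_2$ with $U_1,U_2$ from a fixed countable basis of $X$. So the problem becomes showing that the set of $T$ admitting an IE-pair inside $U_1\times U_2$ is Borel.

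The first key step is to trade this point-level condition for a set-level one. Writing $\mathrm{PID}(C_1,C_2,T)$ for the statement that the pair $(C_1,C_2)$ has an independence set of positive density with respect to $T$, I would establish
\[
E(X,T)\cap(U_1\times U_2)\neq\emptyset\iff \exists\, B_1,B_2\ \text{basic open with}\ \overline{B_i}\subseteq U_i\ \text{and}\ \mathrm{PID}(\overline{B_1},\overline{B_2},T).
\]
The forward implication is direct from the definition of an IE-pair: shrink a neighbourhood of the pair to basic boxes whose closures lie in $U_1\times U_2$, and note that the independence set witnessing the IE-pair for these neighbourhoods is a fortiori one for the larger closed sets. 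The reverse implication is the Kerr--Li realization result behind \cite[Proposition 3.9]{KerrLiMA}: if a pair of closed sets has positive independence density, then there is an IE-pair in their product. As $B_1,B_2$ range over a countable basis, the right-hand side is a countable union, so it remains only to prove that $\{T:\mathrm{PID}(C_1,C_2,T)\}$ is Borel for fixed closed $C_1,C_2$.

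For this I would quantify the finite combinatorics. For each $n$ let $\varphi_{C_1,C_2,T}(n)$ be the largest cardinality of an independence set for $(C_1,C_2)$ contained in $\{1,\dots,n\}$. For closed $C_i$, a fixed finite $J$, and a coloring $\sigma\colon J\to\{1,2\}$, the condition $\bigcap_{j\in J}T^{-j}(C_{\sigma(j)})\neq\emptyset$ is closed in $T$, since $(T,x)\mapsto T^{j}(x)$ is continuous and the projection $C(X,X)\times X\to C(X,X)$ is closed ($X$ being compact); intersecting over the finitely many colorings and taking a finite union over candidate sets $J$ shows that $\{T:\varphi_{C_1,C_2,T}(n)\geq k\}$ is closed for all $n,k$. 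Using the Kerr--Li equivalence between having an independence set of positive lower density and having positive upper independence density, $\mathrm{PID}(C_1,C_2,T)$ holds iff $\limsup_n \varphi_{C_1,C_2,T}(n)/n>0$, which can be written as
\[
\bigcup_{m\geq 1}\ \bigcap_{N\geq 1}\ \bigcup_{n\geq N}\ \{T:\varphi_{C_1,C_2,T}(n)\geq n/m\};
\]
this is a Borel set, completing the argument.

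The main obstacle is not the descriptive bookkeeping but the two structural inputs from local entropy theory that make the reduction work: the realization of set-level positive independence density by an actual IE-pair in the product, and the equivalence of the positive lower density and positive upper density formulations of independence. Both belong to the Kerr--Li theory already invoked in Theorem \ref{thm:epair}, and everything else is an elementary, if slightly tedious, verification that the finite independence conditions are closed and assemble into a Borel set.
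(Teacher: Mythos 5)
Your proposal is correct and follows essentially the same route as the paper: reduce Borelness of $\mathcal{E}_X$ to Borelness of the hitting sets $\{T: E(X,T)\cap(U\times V)\neq\emptyset\}$ and then express that condition as a countable Boolean combination of elementary conditions on $T$ via the Kerr--Li equivalences for positive independence density. The only differences are cosmetic: you invoke the general fact that hit-sets of open sets generate the Borel structure of $K(X^2)$ (the paper assembles the Vietoris basis by hand), and you dualize to closed witnesses $\overline{B_i}\subseteq U_i$ with closed finite independence conditions, explicitly using the Kerr--Li realization of IE-pairs, where the paper works directly with open sets and open conditions.
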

\begin{proof}  
     Considering Theorem \ref{thm:epair}, it remains to show that $\mathcal{E}_X$ is a Borel map for compact metric space $X$.
    Let $U,V$ be open in $X$. We first observe that
\[\{ T \in C(X,X):  \mathcal{E}_X(T) \cap (U \times V)\neq \emptyset\}\tag{\(\dagger\)}
\]
is Borel. Indeed, using an equivalent condition of independence given in \cite[Lemma 3.2]{KerrLiMA}, we have that $\dagger$ is satisfied by $T$ if and only if there is a rational number $ r >0 $ such that for all $l \in \N$ there is an interval $I \subseteq \N$ with $|I| \ge l$ and a finite set $F \subseteq I$ with $|F| \ge r |I|$ such that $F$ is an independent set for $(U,V)$.
It is easy to verify that for fixed $U,V, r, l, I, F$ set
\[
   \{T \in C(X,X): F \text{ is an independent set for } (U,V) \text{ for } T  \tag{\(\ddagger\)} \}  
\]
is open. Now, the set in $\dagger$ is the result of a sequence of countable unions and countable intersections of sets of type $\ddagger$. Hence, $\dagger$ is Borel. Since $X$ has a countable basis, taking unions, we find that $\dagger$ is Borel when $U \times V$ is replaced by any open set $W \subseteq X \times X$. Every closed set in $X \times X$ is the monotonic intersection of a sequence of open sets in $X \times X$. This and the fact that $E(X,T)$ is closed imply that $\dagger $ is Borel when $U \times V$ is replaced by a closed set $C \subseteq X \times X$. Reformulating the last statement, we have that for all open $W \in X \times X$, the set 
\[\{ T \in C(X,X):  \ E(X,T) \subseteq W \}\tag{\(\diamond\)}
\]
is Borel. Putting $\dagger$ and $\diamond$ together, we have that 
\[\{T \in C(X,X):  E(X,T) \subseteq \cup_{i=1}^n  (U_i \times V_i) \ \  \&  \  E(X,T) \cap (U_i \times V_i) \neq \emptyset, 1 \le i \le n \}
\]
is Borel whenever $U_1, \ldots,U_n, V_1, \ldots V_n$ are open in $X$, completing proof.
\end{proof}

For a proof of the following result, see \cite[Subsection 2.4]{garcia2024local} and the references therein. 
\begin{proposition}
\label{prop:CPEUPE}
We have that
\[
R_{\mathcal{E}}(X)=\{T\in C(X,X): (X,T)\text{ has CPE}\}\text{, and}
\]
\[
F_{\mathcal{E}}(X)=\{T\in C(X,X): (X,T)\text{ has UPE}\}.
\]

\end{proposition}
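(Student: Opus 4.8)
The plan is to prove the two claimed equalities separately, translating each side through the definitions. By Definition \ref{def:full} and the identity $\mathcal{E}_X(T)=E(X,T)$, a map $T$ lies in $F_{\mathcal{E}}(X)$ exactly when $E(X,T)=X^2$, and $T$ lies in $R_{\mathcal{E}}(X)$ exactly when the smallest $T\times T$-invariant closed equivalence relation $R$ containing $E(X,T)$ equals $X^2$. Thus the task reduces to matching these two purely dynamical conditions with UPE and CPE respectively, which I would do by importing the standard structure theory of entropy/IE-pairs.

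For the UPE equality, I would recall two classical facts. First, a pair $(x_1,x_2)$ with $x_1\neq x_2$ is an IE-pair if and only if it is an entropy pair in the sense of Blanchard \cite{blanchard93}; that is, the off-diagonal part of $E(X,T)$ is exactly the set of entropy pairs \cite{KerrLiMA}. Second, $(X,T)$ has UPE if and only if every pair of distinct points is an entropy pair, which comes from unwinding the two-non-dense-cover condition \cite{blanchard93}. Combining these, UPE is equivalent to $X^2\setminus\Delta_X\subseteq E(X,T)$. When $|X|=1$ both conditions hold trivially. When $|X|\ge 2$ and $(X,T)$ has UPE, it is weakly mixing and hence perfect, so $X^2\setminus\Delta_X$ is dense in $X^2$; since $E(X,T)$ is closed by Theorem \ref{thm:epair}(1), the containment upgrades to $E(X,T)=X^2$. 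Conversely $E(X,T)=X^2$ trivially gives the off-diagonal containment, so $F_{\mathcal{E}}(X)=\{T:(X,T)\text{ has UPE}\}$.

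For the CPE equality, one direction is already in hand: if $(X,T)$ is $\mathcal{E}$-realizable, then by Lemma \ref{lem:factors} every nontrivial factor $(X_1,T_1)$ satisfies $E(X_1,T_1)\setminus\Delta_{X_1}\neq\emptyset$, which by Theorem \ref{thm:epair}(2) means $h_{\text{top}}(X_1,T_1)>0$; hence every nontrivial factor has positive entropy, i.e. $(X,T)$ has CPE. For the converse I would argue by contraposition. Let $R$ be the smallest $T\times T$-invariant closed equivalence relation containing $E(X,T)$ and let $\pi\colon(X,T)\to(X_1,T_1)=(X/R,T_R)$ be the associated factor. If $R\neq X^2$, then $(X_1,T_1)$ is nontrivial; assuming CPE it has positive entropy, so by Theorem \ref{thm:epair}(2) there is an IE-pair $(\xi,\eta)\in E(X_1,T_1)$ with $\xi\neq\eta$. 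Here I would invoke the lifting property of IE-pairs, namely $E(X_1,T_1)=(\pi\times\pi)(E(X,T))$ \cite{KerrLiMA,garcia2024local}. This produces $(x,y)\in E(X,T)$ with $\pi(x)=\xi\neq\eta=\pi(y)$; but $E(X,T)\subseteq R$ forces $\pi(x)=\pi(y)$, a contradiction. Hence $R=X^2$ and $(X,T)$ is $\mathcal{E}$-realizable, giving $R_{\mathcal{E}}(X)=\{T:(X,T)\text{ has CPE}\}$.

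The main obstacle is the lifting property $E(X_1,T_1)=(\pi\times\pi)(E(X,T))$, which asserts surjectivity of entropy/IE-pairs onto the factor; unlike the projection statement of Theorem \ref{thm:epair}(3), this is genuinely nontrivial and is the crucial input for the CPE converse. A second, more technical point is the passage from the off-diagonal containment to the full equality $E(X,T)=X^2$ in the UPE case, which rests on the perfectness of nontrivial UPE (hence weakly mixing) systems. Both ingredients are established in the cited local entropy literature \cite{KerrLiMA,garcia2024local}, so the proof is primarily a matter of assembling these external facts around the internal Lemma \ref{lem:factors}.
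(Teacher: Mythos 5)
Your proposal is correct, but it is worth pointing out that the paper does not actually prove this proposition: it is stated with a pointer to \cite[Subsection 2.4]{garcia2024local} and the references therein, so any written argument is "different from the paper" by default. What you have done is reconstruct the standard proof from the local-entropy literature, and you have correctly isolated the two inputs that genuinely exceed what the paper's Theorem \ref{thm:epair} provides: (i) for UPE, the identification of the non-diagonal IE-pairs with Blanchard's entropy pairs together with the characterization of UPE as ``every off-diagonal pair is an entropy pair,'' plus the weak-mixing/perfectness step needed to absorb the diagonal into the closed set $E(X,T)$ (your case split on $|X|$ is the right way to handle this, since a finite system with at least two points has a two-element non-dense open cover of zero entropy); and (ii) for CPE, the lifting property $E(X_1,T_1)=(\pi\times\pi)(E(X,T))$, which is the surjectivity statement strictly stronger than the one-directional Theorem \ref{thm:epair}(3) and is exactly the reason the converse implication is nontrivial. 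Your use of the paper's internal Lemma \ref{lem:factors} for the direction ``$\mathcal{E}$-realizable $\Rightarrow$ CPE'' is in the spirit of the paper (that lemma was proved precisely to make this direction formal), and the quotient-by-$R$ contradiction argument for the converse is the classical one. In short: the paper buys the proposition by citation; your version buys it by assembling the same classical facts explicitly, which makes visible where the real content (the IE-pair lifting theorem) lives.
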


Using Proposition \ref{prop:EisBorel}, Proposition \ref{prop:CPEUPE} and Theorem \ref{thm:GammaBorel} we conclude the following result. 
\begin{corollary}
The subset $R_{\mathcal{E}}(X)$ is Borel if and only if the $\mathcal{E}$ rank is bounded on $X$.
\end{corollary}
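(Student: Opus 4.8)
The plan is to recognize the statement as a direct specialization of the general machinery to the entropy pair assignment $\mathcal{E}$, so that essentially no new work is required beyond invoking the three cited results in the correct order. Since the entire theory of Sections 2 and 3 is developed for an arbitrary fixed dynamical pair assignment $\mathcal{P}$, the first step is to note that Proposition \ref{prop:EisBorel} certifies $\mathcal{E}$ as a legitimate dynamical pair assignment; consequently every statement proved for the generic $\mathcal{P}$ applies verbatim with $\mathcal{P}$ replaced by $\mathcal{E}$. In particular, Theorem \ref{thm:GammaBorel} instantiated at $\mathcal{P} = \mathcal{E}$ yields that $R(\mathcal{E}_X)$ is a Borel subset of $C(X,X)$ if and only if the $\mathcal{E}$ rank is bounded on $X$.

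Next I would identify the object $R_{\mathcal{E}}(X)$ appearing in the corollary with the realizable set $R(\mathcal{E}_X)$ of Definition \ref{def:R_p}. By Proposition \ref{prop:CPEUPE}, the $\mathcal{E}$-realizable systems on $X$ are exactly those with CPE, so that $R_{\mathcal{E}}(X) = \{T \in C(X,X) : (X,T)\text{ has CPE}\} = R(\mathcal{E}_X)$. With this identification the biconditional from the previous paragraph reads precisely as the claimed equivalence, which completes the argument. I would also make explicit that ``the $\mathcal{E}$ rank'' in the corollary is nothing but the $\mathcal{P}$ rank of Section 3 specialized to $\mathcal{P} = \mathcal{E}$, namely the $\Gamma$ rank of $\mathcal{E}_X(T) = E(X,T)$.

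There is no genuine obstacle at this level: the corollary is a packaging of earlier results, and its content is inherited entirely from Theorem \ref{thm:GammaBorel}. If one wishes to locate where the substantive work actually lives, it sits in two places that we are entitled to assume here, namely the verification in Proposition \ref{prop:EisBorel} that $\mathcal{E}_X$ is a Borel map (a measurability argument routed through the combinatorial characterization of positive-density independence sets), and the fact underlying Theorem \ref{thm:GammaBorel} that the $\mathcal{P}$ rank is a coanalytic rank, whence the boundedness theorem for coanalytic ranks forces the dichotomy between boundedness and non-Borelness. Granting those inputs, the only care required is the bookkeeping that matches the notation $R_{\mathcal{E}}(X)$ to $R(\mathcal{E}_X)$ and confirms that the three cited ingredients compose to give the stated equivalence.
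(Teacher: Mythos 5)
Your proposal is correct and follows exactly the paper's route: the corollary is obtained by combining Proposition~\ref{prop:EisBorel} (so that $\mathcal{E}$ is a dynamical pair assignment), Proposition~\ref{prop:CPEUPE} (identifying $R_{\mathcal{E}}(X)$ with $R(\mathcal{E}_X)$, the CPE systems), and Theorem~\ref{thm:GammaBorel} specialized to $\mathcal{P}=\mathcal{E}$. No further comment is needed.
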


Examples of TDSs with CPE and arbitrarily high $\mathcal{E}$ rank have been constructed in \cite{barbieri2020,salo2019entropy,darjiGR}. In particular the previous result is used in \cite{darjiGR} to prove that the for Cantor spaces $X$ the intersection of $\text{CPE}(X)$ and topological mixing maps is not Borel. 

\begin{remark}
  In \cite{KerrLiMA}, other independence pairs were defined, IN-pairs and IT-pairs. These pairs characterize positive sequence entropy and tame systems. The construction of dynamical pair assignments obtained from these notions should be analogous. 
\end{remark}

\subsection{Regionally proximal pairs}
Regionally proximal pairs were defined by Veech \cite{veech1968equicontinuous}, and have been extensively studied (see \cite{auslander1988minimal}). 
\begin{definition}
    Let $(X,T)$ be a TDS. We say $(x,y)\in X^2$ is a \textbf{regionally proximal pair of $(X,T)$} if for every $\varepsilon>0$ there exist $x',y'\in X$ and $n\in \N$ such that $d(x,x'),d(y,y')\leq \varepsilon$ and $d(T^nx',T^ny')\leq \varepsilon$. The regionally proximal pairs of the TDS $(X,T)$ will be denoted by $Q(X,T)$.
\end{definition}

It is not difficult to check that $Q(X,T)$ is $T\times T$-invariant and closed. However, it is not necessarily an equivalent relation. 

\begin{definition}
   We define the \textbf{regionally proximal pair assignment} as the collection
      \begin{align*}
\mathcal{Q} = & \left\{ \mathcal{Q}_X \mid \text{X is a compact metrizable space,} \right. \\
 &{\mathcal P}_X \colon C(X,X) \to K(X^2) \text{, and } \mathcal{Q}_X(T) = Q(X,T)\}.
\end{align*}
\end{definition}

\begin{proposition}
\label{prop:QisBorel}
    We have that $\mathcal{Q}$ is a dynamical pair assignment.
    \end{proposition}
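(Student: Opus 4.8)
The plan is to verify the three defining conditions of a dynamical pair assignment (Definition \ref{def:dynpair}) for the regionally proximal assignment $\mathcal{Q}$, exactly paralleling the structure of the entropy-pair case (Proposition \ref{prop:EisBorel}). Two of the three conditions are either already noted or routine, so the real content is the Borel-measurability of $\mathcal{Q}_X$. First I would dispense with condition (1): the text already observes that $Q(X,T)$ is closed and $T\times T$-invariant, so $\mathcal{Q}_X$ indeed takes values in $K(X^2)$ and lands in the invariant compact sets. Condition (2), factor-invariance, I would check directly from the definition: if $\varphi\colon(X,T)\to(X_1,T_1)$ is a factor map and $(x,y)\in Q(X,T)$, then given $\varepsilon>0$ one uses uniform continuity of $\varphi$ to pick $\delta>0$ with $d(a,b)\le\delta \Rightarrow d(\varphi(a),\varphi(b))\le\varepsilon$; applying the regional proximality of $(x,y)$ at scale $\delta$ produces $x',y',n$ witnessing proximality downstairs, so $(\varphi(x),\varphi(y))\in Q(X_1,T_1)$.

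The main obstacle, and the heart of the proof, is condition (3): showing $\mathcal{Q}_X$ is a Borel map into $K(X^2)$. As in Proposition \ref{prop:EisBorel}, I would reduce this to showing that for every open $W\subseteq X^2$ the sets $\{T: Q(X,T)\cap W\neq\emptyset\}$ and $\{T: Q(X,T)\subseteq W\}$ are Borel, since the Vietoris (Hausdorff) topology on $K(X^2)$ is generated by subbasic sets of these two forms, and by the closedness of $Q(X,T)$ one can pass between open and closed target sets by a monotone sequence argument identical to the entropy-pair case. The key step is to write the condition $(x,y)\in Q(X,T)$ in an explicitly arithmetical form. Fixing a countable basis $\{B_i\}$ of $X$, regional proximality of some pair in an open box $U\times V$ reduces to a statement quantifying over rational $\varepsilon$, over basic neighborhoods, and over $n\in\N$: there exist $x',y'$ near the box and some iterate $T^n$ bringing them $\varepsilon$-close. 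The crucial observation is that for fixed $U,V,\varepsilon$ and fixed $n$, the set of $T$ for which there exist $\varepsilon$-witnesses $x',y'$ is open in $C(X,X)$ (it is an existential condition on $T^n$, which depends continuously on $T$ in the uniform topology), and the full condition is then obtained by a countable union over $n$ and a countable intersection over the rational scales $\varepsilon\to 0$ and over a neighborhood basis of the box. This expresses $\{T: Q(X,T)\cap(U\times V)\neq\emptyset\}$ as a countable Boolean combination of open sets, hence Borel.

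I would then promote this from basic boxes $U\times V$ to arbitrary open $W\subseteq X^2$ by taking countable unions over boxes contained in $W$, and promote from open $W$ to closed $C$ by writing $C$ as a decreasing intersection of open sets and using that $Q(X,T)$ is closed (so $Q(X,T)\cap C\neq\emptyset$ iff $Q(X,T)$ meets every member of the sequence, by compactness). Combining the two subbasic families exactly as in Proposition \ref{prop:EisBorel} gives that the preimage under $\mathcal{Q}_X$ of every basic Vietoris open set is Borel, which is precisely Borel-measurability of $\mathcal{Q}_X$. The one point requiring care is the interchange of the ``$\varepsilon$-close after iterating'' quantifier with continuity in $T$: I expect the cleanest route is to fix rational scales and a finite approximation, phrasing the existential witness over a countable dense set of points $x',y'$ so that the resulting condition is genuinely open and not merely analytic, mirroring the role that \cite[Lemma 3.2]{KerrLiMA} plays in the entropy-pair argument.
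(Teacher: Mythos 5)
Your overall architecture matches the paper's: conditions (1) and (2) are handled exactly as in the text, and the Borel-measurability of $\mathcal{Q}_X$ is reduced, as in Proposition \ref{prop:EisBorel}, to showing that $A(U,V)=\{T: Q(X,T)\cap(U\times V)\neq\emptyset\}$ is Borel for open $U,V$. But there is a genuine gap at the central step. The condition ``$Q(X,T)\cap(U\times V)\neq\emptyset$'' has the quantifier shape $\exists(x,y)\in U\times V\ \forall\varepsilon\ \exists x',y',n$, with the outer existential ranging over an uncountable set. When you push the $\forall\varepsilon$ to the outside to get a countable intersection of open sets, you do not recover $A(U,V)$: the set $B(U,V)=\bigcap_{m}\{T:\exists(x,y)\in U\times V,\ \exists n,\ d(T^nx,T^ny)<1/m\}$ only satisfies $A(U,V)\subseteq B(U,V)\subseteq A(\overline{U},\overline{V})$. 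The left containment is trivial; the right one is a compactness argument (extract a convergent subsequence of the $1/m$-approximate witnesses, whose limit lies in $\overline{U}\times\overline{V}$ and is genuinely regionally proximal). In general $B(U,V)\neq A(U,V)$, so your assertion that the countable Boolean combination ``expresses'' $A(U,V)$ is not justified as stated, and the proposed fix (witnesses over a countable dense set) addresses only the openness of the fixed-scale condition, not this quantifier-order issue.

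The missing idea is the sandwich-plus-exhaustion step the paper uses: choose increasing open sets $U_n,V_n$ with $\overline{U}_n\subseteq U$, $\overline{V}_n\subseteq V$, $\bigcup U_n=U$, $\bigcup V_n=V$; then the two containments above give
\[
A(U,V)=\bigcup_{n}\bigcap_{m}A(m,U_n,V_n),
\]
exhibiting $A(U,V)$ as $G_{\delta\sigma}$. Note also that the paper's fixed-scale sets $A(m,U,V)$ do not involve the approximants $x',y'$ at all --- they simply ask for a pair in $U\times V$ brought $1/m$-close by some iterate --- and openness in $T$ is immediate from continuity of $T\mapsto T^n$ on a compact space; your detour through a countable dense set of witnesses is unnecessary. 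With the sandwich and exhaustion inserted, your argument becomes the paper's proof.
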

\begin{proof}
Let $X$ be a compact metrizable space. 
It is well known and easy to check that $Q_X(T)$ is $T\times T$ invariant and closed. Hence, (1) of Definition~\ref{def:dynpair} is satisfied.
Property (2) of Definition~\ref{def:dynpair} is also well known, but we will write the proof for the sake of completeness. Let $\varphi\colon (X,T)\to (X_1,T_1)$ be the factor map, $(x,y)\in \cQ_X(T)$ and $\varepsilon >0$. Let $\delta$ correspond to the uniform continuity of $\varphi$ with respect to $\varepsilon$. Let $x',y'\in X$ and $n\in \N$ be such that $d(x,x'),d(y,y')\leq \delta$ and $d(T^nx',T^ny')\leq \delta$. Then, 
\[d(\varphi(x),\varphi(x')),d(\varphi(y),\varphi(y'))\leq \varepsilon\text{ and}\]
\[d(T_1^n(\varphi( x')), T_1^n(\varphi (y'))=d(\varphi(T^n (x')), \varphi(T^ny'))\leq \varepsilon,\] verifying that $(\varphi(x), \varphi(y))$ is a regionally proximal pair of $(X_1,T_1)$. Hence, we have shown that $\varphi( \cQ_X(T)) \subseteq \cQ_{X_1}(T_1)$. 


Next, we show that $\cQ_X$ is Borel. First observe that replacing $\le$ by $<$ leaves the definition of regionally proximal pair unchanged. We proceed as in the proof of Proposition~\ref{prop:EisBorel}.  For any set $U, V \subseteq X$,
\[A (U,V): =\{ T \in C(X,X):  \mathcal{Q}_X(T) \cap (U \times V )\neq \emptyset\}. \]
It will suffice to show that $A(U,V)$
is Borel whenever $U,V$ are open in $X$. The rest of the proof follows as the proof of Proposition~\ref{prop:EisBorel}. 

For any open sets $U,V$ in $X$, and $m \in \N$, we have that 
\begin{multline*}
    A(m, U,V) := 
  \{T \in C(X,X): \exists (x,y) \in U \times V, \\
 \ n \in \N \mbox{ such that }d(T^n(x),T^n(y)) < 1/{m}\}
\end{multline*}
is an open subset of $C(X,X)$. Hence, 
$\cap_{m=1}^{\infty} A(m, U,V)$ is $G_{\delta}$ (countable intersection of open subsets). Moreover, as $X$ compact, we have that  $\cap_{m=1}^{\infty} A(m, U,V) \subseteq A(\overline{U}, \overline{V}).$

Now fix open sets $U,V$ in $X$, and let $\{U_n\}$ and $\{V_n\}$ be increasing sequences of open sets such that $\overline {U}_n \subseteq U$, $\overline {
V}_n \subseteq V$ and $\cup U_n = U$ and $\cup V_n = V$.
Then, we have that 
\[ A (U,V)= \cup_{n=1} ^{\infty}\cap_{m=1}^{\infty} A(m, U_n,V_n),\]
verifying that $A(U,V)$ is $G_{\delta\sigma}$, in particular Borel, and completing the proof.   
\end{proof}

A TDS is \textbf{equicontinuous} if for every $\varepsilon>0$ there exists $\delta>0$ such that $d(T^nx,T^ny)\leq \varepsilon, \ \forall n\in \N$ whenever $d(x,y)\leq \delta$.

It is well known that a TDS $(X,T)$ is equicontinuous if and only if $\mathcal{Q}_X(T)=\Delta_X$ (e.g. see \cite[Page 126]{auslander1988minimal}). This implies that equicontinuity plays a similar role for regional proximal pairs, as zero entropy to entropy pairs. 
\begin{proposition}
 If a TDS $(X,T)$ is $\mathcal{Q}$-realizable, then every non-trivial factor is not equicontinuous.   
\end{proposition}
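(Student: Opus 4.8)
The plan is to combine the general factor lemma (Lemma \ref{lem:factors}) with the known characterization of equicontinuity in terms of regionally proximal pairs. Since Proposition \ref{prop:QisBorel} establishes that $\mathcal{Q}$ is a dynamical pair assignment, all the hypotheses needed to apply Lemma \ref{lem:factors} with $\mathcal{P} = \mathcal{Q}$ are already in place, so the argument reduces to a short chain of implications.

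First I would let $(X,T)$ be a $\mathcal{Q}$-realizable TDS and let $(X_1,T_1)$ be an arbitrary nontrivial factor of $(X,T)$. Applying Lemma \ref{lem:factors} to the assignment $\mathcal{Q}$, I obtain $\mathcal{Q}_{X_1}(T_1)\setminus \Delta_{X_1}\neq \emptyset$; in particular $\mathcal{Q}_{X_1}(T_1)\neq \Delta_{X_1}$, i.e.\ the set of regionally proximal pairs of the factor strictly contains its diagonal.

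Next I would invoke the cited equivalence that a TDS $(Y,S)$ is equicontinuous if and only if $\mathcal{Q}_Y(S)=\Delta_Y$ (see \cite[Page 126]{auslander1988minimal}). Since $\mathcal{Q}_{X_1}(T_1)$ contains a pair off the diagonal, the factor $(X_1,T_1)$ violates this criterion and is therefore not equicontinuous, completing the argument.

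There is essentially no genuine obstacle here: the statement is an immediate specialization of Lemma \ref{lem:factors} to the regionally proximal assignment, translated through the equicontinuity characterization. The only point requiring care is verifying the hypotheses of Lemma \ref{lem:factors}, namely that $\mathcal{Q}$ satisfies the three axioms of a dynamical pair assignment, which is precisely the content of Proposition \ref{prop:QisBorel} and hence may be cited directly.
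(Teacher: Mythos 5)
Your proof is correct and amounts to the paper's own argument in a cleaner packaging: the paper proves the contrapositive by pulling the diagonal relation $\Delta_{X_1}=Q_{X_1}(T_1)$ back through the factor map to get a proper closed invariant equivalence relation containing $\mathcal{Q}_X(T)$, which is exactly the content of Lemma~\ref{lem:factors} re-derived inline for $\mathcal{Q}$. Citing Lemma~\ref{lem:factors} directly (justified by Proposition~\ref{prop:QisBorel}) and then invoking the characterization of equicontinuity via $\mathcal{Q}_{X_1}(T_1)=\Delta_{X_1}$ is a valid and arguably preferable route, since it avoids repeating the pullback argument.
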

  \begin{proof}
      Assume there is a non-trivial factor map $\varphi\colon (X,T)\to (X_1,T_1)$ such that $(X_1,T_1)$ is equicontinuous.  Let $\mathcal{R}$ be the relation on $X$ defined by $(x,y) \in {\mathcal R}$ if and only if $(\varphi(x), \varphi(y)) \in Q_{X_1}(T_1)$. As $Q_{X_1}(T_1)=\Delta_{X_1}$, we have that $\mathcal {R}$ is a closed $T$-invariant equivalence relation. 
       Using (2) of Definition \ref{def:dynpair} we obtain that $\mathcal{Q}_X(T)\subset \mathcal{
      R
      }$. Since $(X_1,T_1)$ is non-trivial, there exists $x,y \in X$ such that $\varphi(x) \neq \varphi(y)$, implying that $(x,y) \notin {\mathcal R}$, i.e., $(X,T)$ is not $\mathcal{Q}$-realizable. 
  \end{proof}

Regionally proximal pairs are particularly well studied for minimal TDSs. We say that a TDS $(X,T)$ is $\textbf{minimal}$ if for every non-empty closed subset $K\subset X$ with $T(K)\subset K$ we have $K=X$.

If $(X,T)$ is a minimal TDS then $\mathcal{Q}_X$ is an equivalence relation \cite[Chapter 9, Theorem 8]{auslander1988minimal}. This implies the following.
\begin{proposition}
A minimal TDS is $\mathcal{Q}$-full if and only it is $\mathcal{Q}$-realizable.     
\end{proposition}
    

\begin{remark}
    Other dynamical pairs that we will not study in this paper but we suspect that they fit in the framework of dynamical pair assignments are proximal pairs \cite{auslander1988minimal}, asymptotic pairs (e.g., see \cite{barbieri2020}), transitivity pairs \cite{westrick2019topological} and mean dimension pairs \cite{garciagutman}.
\end{remark}

\section{Coanalytic ranks}
Coanalytic ranks provide a natural framework to determine when a coanalytic subset is Borel. We refer the reader to \cite[Chapter 34]{Kechris} as a general reference to the topic. We will prove that the $\cP$ rank and the $\Gamma$ rank are coanalytic (Corollary \ref{cor:gamma} and Theorem \ref{GammaBorel}). As a consequence, we will prove Theorem \ref{thm:GammaBorel}.


\begin{definition}
Let $C$ be a set. A {\bf rank} on $C$ is simply a function $\varphi:C\rightarrow\omega_1$, where $\omega_1$ is the set of countable ordinals. 
\end{definition}
\begin{definition}\label{pi11}\cite[Section 34B]{Kechris}
Let $X$ be a Polish space, $C \subseteq X$ a coanalytic subset, and $\varphi : C \rightarrow \omega_1$ a rank on $C$. We say that $\varphi$ is a  {\bf  coanalytic rank} if there are relations $P, Q \subseteq X^2$, with $P$ and $X^2\setminus Q$ analytic, such that for all $y \in C$ we have that 
\begin{gather*}
    \{x \in C: \varphi (x) \le \varphi(y)\} = \{x\in X: (x,y) \in P\} = \{x \in X: (x,y) \in Q\}.
\end{gather*}
\end{definition}
Loosely speaking, $\varphi$ is a coanalytic rank if $\{x: \varphi (x) \le \varphi(y)\} $ is "uniformly Borel in $y$".

A classical example of a coanalytic rank is the Cantor-Bendixson rank. In fact, every coanalytic set admits a coanalytic rank \cite{Kechris}.

We will use the following reformulation of coanalytic rank.
\begin{proposition}\label{altpi11}\cite[Exercise 34.3]{Kechris} Let $X$ be a Polish space, $C\subset X$ a coanalytic subset and $\varphi$ a rank. Then, $\varphi$ is a coanalytic rank if and only if there  are analytic relations $P, Q \subseteq X^2$ such that for all $y \in C$ we have that
    \begin{align*}
    & \{x \in C: \varphi (x) \le \varphi(y)\} = \{x\in X: (x,y) \in P\},\text{ and} \\ 
    & \{x \in C: \varphi (x) < \varphi(y)\}= \{x \in X: (x,y) \in Q\}.
\end{align*}
\end{proposition}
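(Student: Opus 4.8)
The plan is to prove the two implications separately, in each case manufacturing the relations required by the target formulation out of the relations supplied by the hypothesis, using only three operations that are transparent on complexity: the coordinate-swap homeomorphism $s(x,y)=(y,x)$ of $X^2$ (whose inverse image preserves both the analytic and the coanalytic classes), complementation (which exchanges the two classes), and finite intersection (which preserves each). Throughout I would write $L_{\le}(y)=\{x\in C:\varphi(x)\le\varphi(y)\}$ and $L_{<}(y)=\{x\in C:\varphi(x)<\varphi(y)\}$, and record two elementary facts: both sets are contained in $C$, and since $\varphi$ takes ordinal values, for $x,y\in C$ one has $\varphi(x)<\varphi(y)\iff\neg(\varphi(y)\le\varphi(x))$ and $\varphi(x)\le\varphi(y)\iff\neg(\varphi(y)<\varphi(x))$.

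For the forward direction, suppose (as in Definition \ref{pi11}) that $P$ is analytic and $Q$ is coanalytic with $P_y=Q_y=L_{\le}(y)$ for every $y\in C$. I keep $P$ itself as the analytic relation witnessing $\le$ in Proposition \ref{altpi11}. To produce the analytic relation witnessing $<$, I claim that for $y\in C$
\[
L_{<}(y)=\{x: (x,y)\in P \text{ and } (y,x)\notin Q\}.
\]
Indeed, when $x\in C$ the first clause says $\varphi(x)\le\varphi(y)$ and the second says $\neg(\varphi(y)\le\varphi(x))$, whose conjunction is $\varphi(x)<\varphi(y)$; when $x\notin C$ the first clause already fails because $P_y=L_{\le}(y)\subseteq C$. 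The displayed set is the intersection of the analytic set $P$ with $s^{-1}(X^2\setminus Q)$, and since $X^2\setminus Q$ is analytic this intersection is analytic, as required.

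For the reverse direction, suppose instead that $P$ and $Q$ are analytic with $P_y=L_{\le}(y)$ and $Q_y=L_{<}(y)$ for every $y\in C$. Again $P$ serves unchanged as the analytic relation of Definition \ref{pi11}. For the coanalytic relation I use, for $y\in C$,
\[
L_{\le}(y)=\{x: x\in C \text{ and } (y,x)\notin Q\},
\]
which holds because for $x\in C$ the clause $(y,x)\notin Q$ reads $\neg(\varphi(y)<\varphi(x))$, that is $\varphi(x)\le\varphi(y)$, while the explicit clause $x\in C$ discards every $x\notin C$. Setting $Q^{*}=(C\times X)\cap s^{-1}(X^2\setminus Q)$ thus yields a relation with $Q^{*}_y=L_{\le}(y)=P_y$ for all $y\in C$; it is coanalytic because $C\times X$ is coanalytic (as $C$ is) and $s^{-1}(X^2\setminus Q)$ is coanalytic (as $Q$ is analytic). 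This furnishes the pair required by Definition \ref{pi11}.

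The step I expect to be delicate is the complexity bookkeeping around the restriction to $C$, which is the only coanalytic ingredient in sight. The naive rewriting $L_{<}(y)=\{x: x\in C \text{ and } \neg(\varphi(y)\le\varphi(x))\}$ would express $L_{<}$ as an intersection of two coanalytic conditions, hence only coanalytic, whereas Proposition \ref{altpi11} demands an analytic relation. The resolution, and the crux of the argument, is that membership in $C$ is available through two certificates of different complexity, namely the coanalytic set $C$ itself and the analytic sections $P_y$, which already lie inside $C$; one should therefore enforce the condition $x\in C$ by the analytic section $P_y$ exactly when an analytic relation is wanted, and by $C$ directly only when a coanalytic relation is permitted. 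Matching these choices to the two directions is what makes both constructions land in the correct class.
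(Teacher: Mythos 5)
Your proof is correct. The paper does not actually prove Proposition~\ref{altpi11} --- it is quoted directly from \cite[Exercise 34.3]{Kechris} --- and your argument (keeping $P$ unchanged in both directions, and converting between the $<$-relation and the $\le$-relation by swapping coordinates and complementing $Q$, with membership in $C$ certified by the analytic sections of $P$ when an analytic relation is required and by $C$ itself when a coanalytic one is permitted) is exactly the standard solution of that exercise, so there is nothing in the paper to diverge from.
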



\subsection{Derivatives}
In this subsection, we present examples of coanalytic ranks. Although this section does not directly contribute to the main results of the paper, it serves to motivate the definitions and provide context for the results.

We recall the notion of derivatives from \cite[Section~34.D]{Kechris}.
\begin{definition} 
A map $\D:K(X) \rightarrow K(X)$ is a {\bf derivative} if the following holds:
\[ \D(A) \subseteq A \ \ \ \ \& \ \ \ A \subseteq B \implies \D(A) \subseteq \D(B). \] 

\end{definition}
 Derivatives appear in a variety of contexts and Borel derivatives induce coanalytic ranks in a natural way. For  a derivative $\D$, let 
 \begin{align*}
    &\D ^{0} (A)  =  A\\
    &\D ^{\alpha+1}  =  \D (\D^{\alpha}(A))\\
    &\D ^{\lambda} (A) = \cap _{\beta < \lambda} \D^{\beta}(A) \textit { if } \lambda \textit{ is a limit ordinal.}
\end{align*}
Let $A \in K(X)$. Then, there exists a countable ordinal $\alpha$ such that $\D^{\alpha} = \D^{\alpha+1}$. Such an ordinal exists since in a separable metric space a chain of strictly decreasing sequence of closed sets must be countable. We let $|A|_{\D}$ be the least such $\alpha$. Moreover, we let $\D^{\infty} (A)=\D^{|A|_{\D}}$, that is, the stable part of $A$. 

A classical Borel derivative is the Cantor-Bendixson derivative given by 
\[ A \rightarrow A' \] 
where $A'$ is the set of limit-points of $A$ \cite[Theorem 6.11]{Kechris}. The $\alpha^{th}$ Cantor-Bendixson derivative of $A$ is denoted by $A^{\alpha}$, $|A|_{CB}$ denotes least ordinal $\alpha$ such that $A^{\alpha +1} = A^{\alpha}$, and $A^{\infty} = A^{|A|_{CB}}$, that is, the stable part of $A$.

The following is an important theorem that relates derivatives to coanalytic ranks.
\begin{theorem}\label{thm:BDerivatives} \cite[Theorem 34.10]{Kechris}
Let $\D:K(X) \rightarrow K(X)$ be a Borel derivative and 
\[ C = \{A \in \K(X): D^{\infty}(A) = \emptyset\}.\]
Then, $C$ is coanalytic and $\varphi : C \rightarrow \omega_1$ defined by $\varphi(A)= |A|_{\D}$ is a coanalytic rank on $C$.
\end{theorem}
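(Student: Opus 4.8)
The plan is to prove the two assertions — that $C$ is coanalytic and that $A \mapsto |A|_\D$ is a coanalytic rank — separately, extracting the comparison relations required by Proposition~\ref{altpi11} from the Borel structure of $\D$.

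First I would show $C$ is coanalytic by identifying $\D^\infty(A)$ as the largest $\D$-invariant closed subset of $A$. Indeed, monotonicity and transfinite induction give that any closed $B \subseteq A$ with $\D(B) = B$ satisfies $B \subseteq \D^\alpha(A)$ for every $\alpha$, hence $B \subseteq \D^\infty(A)$; conversely $\D^\infty(A)$ is itself such a fixed set. Therefore
\[
\D^\infty(A) \neq \emptyset \iff \exists B \in K(X)\ \big(B \neq \emptyset \ \wedge\ B \subseteq A \ \wedge\ \D(B) = B\big).
\]
Since $\D$ is Borel, the matrix on the right is Borel in $(A,B) \in K(X)^2$, so the right-hand side is analytic and $C$, its complement, is coanalytic.

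For the rank I would invoke Proposition~\ref{altpi11}: it suffices to produce analytic relations $P$ (for $\le$) and $Q$ (for $<$) on $K(X)^2$ that compute $|A|_\D \le |B|_\D$ and $|A|_\D < |B|_\D$ whenever $A \in C$. The guiding observation is that for $A \in C$ the derivation eventually empties $A$, and assigning to each removed point $x$ its removal stage $\rho_A(x)$ (the unique $\alpha$ with $x \in \D^\alpha(A)\setminus\D^{\alpha+1}(A)$) one has $\D^\alpha(A)=\{x:\rho_A(x)\ge\alpha\}$ and hence $|A|_\D=\sup\{\rho_A(x)+1:x\in A\}$. From this both comparisons can be read off the two derivations run in parallel: for $A,B \in C$,
\[
|A|_\D \le |B|_\D \iff \forall x \in A\ \exists y \in B\ \big(\rho_A(x) \le \rho_B(y)\big),
\]
while $|A|_\D < |B|_\D$ holds exactly when $\D^{|A|_\D}(B) \neq \emptyset$, i.e. some point of $B$ survives the entire lifetime of $A$'s derivation. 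The task is to turn each of these into a genuine $\Sigma^1_1$ certificate.

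The main obstacle is precisely that these characterizations quantify over the transfinite stages of the derivation: unwinding the definitions, $\rho_A(x)\le\rho_B(y)$ means $\forall\alpha\,(x\in\D^\alpha(A)\Rightarrow y\in\D^\alpha(B))$, and the iteration $\alpha\mapsto\D^\alpha$ is not directly analytic. This is why the statement is a theorem in its own right and not a corollary of the rank theorem for well-founded trees: the natural $\omega$-branching tree recording strictly descending removal stages is well-founded whether or not $A\in C$, so it computes a rank but cannot detect membership in $C$. I would overcome this by replacing the transfinite iteration with existential witnesses drawn from $K(X)$, in the same spirit as the fixed-set certificate used for $C$ above: encode the parallel derivation of $A$ and $B$ by a single Borel derivative on $K(X\sqcup X)$ and express each comparison as the existence of a suitable nonempty closed ``invariant'' witness set recording that one derivation outlives the other. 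A delicate point, and the reason both certificates must be built by hand, is that a coanalytic rank requires \emph{independent} analytic relations for $\le$ and for $<$; the latter cannot be obtained merely as the complement of the former. Once both relations are shown to be $\Sigma^1_1$, Proposition~\ref{altpi11} yields that $A \mapsto |A|_\D$ is a coanalytic rank on $C$, completing the proof.
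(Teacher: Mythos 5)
Your first half is correct and is essentially the standard argument: $\D^{\infty}(A)$ is the largest compact $B \subseteq A$ with $\D(B)=B$, so $K(X) \setminus C$ is the projection of the Borel set $\{(A,B) : \emptyset \neq B \subseteq A, \ \D(B)=B\}$ and $C$ is coanalytic. (This is precisely the dual of what the paper does for expansions via the sets $\F$ and $\G$ in the proof of Theorem~\ref{BorelExp}; the paper itself only cites Kechris for the derivative version, so that proof is the right point of comparison.) Your reformulations of $|A|_{\D} \le |B|_{\D}$ and $|A|_{\D} < |B|_{\D}$ in terms of removal stages are also correct as set-theoretic identities.

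The gap is that the heart of the theorem---actually producing the analytic relations $P$ and $Q$ required by Proposition~\ref{altpi11}---is never carried out, and the strategy you sketch for it faces a concrete obstacle. An existential $K(X)$-witness of a Borel ``invariance'' property is the natural certificate for \emph{non-termination} (a nonempty $B \subseteq A$ with $\D(B)=B$ witnesses $\D^{\infty}(A) \neq \emptyset$), whereas $P$ must certify, for each fixed $B \in C$, the conjunction of the coanalytic fact $A \in C$ with ``$A$'s derivation terminates no later than $B$'s.'' The coupled derivative you propose on $K(X \sqcup X)$ either acts componentwise---in which case its iteration is just $(\D^{\alpha}(A), \D^{\alpha}(B))$, which stabilizes at $(\emptyset,\emptyset)$ for all $A,B \in C$ and records no comparison---or else it must ``freeze'' one coordinate when the other empties, and every such rule violates the monotonicity clause $A' \subseteq B' \Rightarrow \D(A') \subseteq \D(B')$, so it is not a derivative and the largest-fixed-point certificate is unavailable. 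The proof that works (Kechris's, reproduced in dual form in the paper's proof of Theorem~\ref{BorelExp}) uses a genuinely different device: codes $x \in \lo$ for countable linear orders and relations $\cR, \cS$ asserting the existence of a derivation sequence $h$ indexed along $x$. The mechanism your witness sets have no analogue of is that, for $A \in C$, such a sequence forces $x \in \wf$ with $|x|^* \le |A|_{\D}$, because the map sending each $m$ to the least stage $\alpha$ at which $\D^{\alpha+1}(A)$ escapes $h(m)$ is order-preserving into the ordinal $|A|_{\D}$. Without that mechanism, or an equally explicit substitute, the second (and main) assertion of the theorem remains unproved.
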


\subsection{Expansions}
In \cite[p. 270]{Kechris}, the concept of an expansion operator $E$ is introduced as the complement of a derivative operator $\bD$. Specifically, for open sets $A$, the expansion is defined by $E(A) = X \setminus \bD (X \setminus A)$. However, this notion is not directly applicable for our situation, as we require expansions suitable for the space $K(X)$, which we define below.

\begin{definition}
Let $X$ be a Polish space. We say that $\E$ is an expansion if $\E:K(X) \rightarrow K(X)$ and \[A \subseteq \E(A) \ \ \ \ \& \ \ \  A \subseteq B \implies \E(A) \subseteq \E(B).\]
\end{definition}
Let $X$ be a Polish space, $E$ an expansion, and $\alpha,\lambda$ countable ordinals, with $\lambda$ a limit ordinal. We have the following notation.
\begin{align*}
    &\E ^{0} (A) =  A\\
    &\E ^{\alpha+1} (A) =  \E (\E^{\alpha}(A))\\
    &\E ^{\lambda} (A) = \overline{\cup _{\beta < \lambda} \E^{\beta}(A)}.
\end{align*}
We denote the smallest countable ordinal, $\alpha$, such that $\E ^{\alpha+1}(A)  =  \E^{\alpha}(A)$ by $|A|_{\E}$. We have that $|A|_{\E}$ is a countable ordinal because $X$ is a separable metrizable space. 
Given $J\subset K(X)$, the $\E$\textbf{ rank} on $J$ refers to the function
\[
\varphi_{\E}\colon J\to \omega_1
\]
defined by $\varphi(A)= |A|_\E$ for $A\in J$.
We set $\E ^{\infty} (A)=\E ^{|A|_{\E}}$.
Finally, we represent the exhaustive subsets as
\[ \C= \{A \in K(X): \E ^{\infty} (A) =X \}.
\]


\begin{theorem}\label{BorelExp}
Let $X$ be a compact metrizable space and $\E$ a Borel expansion.
Then $\C$ is a coanalytic subset and the $\E$ rank, $\varphi_{\E}$, on $\C$ is a coanalytic rank.
\end{theorem}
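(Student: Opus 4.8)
The plan is to prove Theorem \ref{BorelExp} by dualizing the proof of Theorem \ref{thm:BDerivatives}, systematically replacing the decreasing derivative iteration (intersections at limits, target $\emptyset$) by the increasing expansion iteration (closures of unions at limits, target $X$). First I would establish that $\C$ is coanalytic. The key observation is that $\E^{\infty}(A)$ is the \emph{least $\E$-invariant closed set containing} $A$: it is closed, satisfies $\E(\E^{\infty}(A))=\E^{\infty}(A)$, and whenever $F$ is closed with $A\subseteq F$ and $\E(F)=F$, monotonicity and transfinite induction (using $\overline{F}=F$ at limit stages, so $\E^{\lambda}(A)=\overline{\bigcup_{\beta<\lambda}\E^{\beta}(A)}\subseteq\overline{F}=F$) give $\E^{\alpha}(A)\subseteq F$ for all $\alpha$, hence $\E^{\infty}(A)\subseteq F$. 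Consequently
\[
A\notin \C \iff \exists F\in K(X)\ \big( A\subseteq F,\ F\neq X,\ \E(F)=F \big).
\]
The matrix here is Borel in $(A,F)$: the conditions $A\subseteq F$ and $F\neq X$ are Borel in the Vietoris topology, and $\{F:\E(F)=F\}$ is the preimage of the closed diagonal of $K(X)^2$ under the Borel map $F\mapsto(\E(F),F)$. Since $K(X)$ is Polish, projecting out $F$ exhibits the complement of $\C$ as analytic, so $\C$ is coanalytic.

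For the rank, I would verify the hypotheses of Proposition \ref{altpi11} by producing analytic relations $P,Q$ computing the comparisons $\le$ and $<$ of $\varphi_{\E}$. The guiding identity is that, for all $A,B$,
\[
\varphi_{\E}(A)\le \varphi_{\E}(B) \iff \E\big(\E^{\varphi_{\E}(B)}(A)\big)=\E^{\varphi_{\E}(B)}(A),
\]
i.e. $A$ has already stabilized by the stage at which $B$ stabilizes. To make this effective I would introduce a Borel iteration function $G\colon \mathrm{WO}\times K(X)\to K(X)$ with $G(w,A)=\E^{\mathrm{ot}(w)}(A)$, defined by transfinite recursion along the well-order coded by $w$ (applying $\E$ at successors and the closure of the union at limits). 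Then $\varphi_{\E}(A)\le \mathrm{ot}(w)$ is equivalent to the Borel condition $\E(G(w,A))=G(w,A)$, while $\mathrm{ot}(w)=\varphi_{\E}(B)$ is captured by demanding $\E(G(w,B))=G(w,B)$ together with $\E(G(w',B))\neq G(w',B)$ for every proper initial segment $w'\sqsubsetneq w$. Combining these yields formulas for $\{A\in\C:\varphi_{\E}(A)\le\varphi_{\E}(B)\}$ and its strict version, which I would then convert into analytic relations $P,Q$.

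The main obstacle is twofold, and both difficulties stem from the limit stage. First, one must check that $G$ is genuinely Borel: the successor step is routine, but the limit step here is the closure of a union rather than the plain intersection of the derivative case, so I would need to confirm that $A\mapsto\overline{\bigcup_{\beta<\lambda}\E^{\beta}(A)}$ is compatible with a Borel transfinite recursion along codes in $\mathrm{WO}$ and that the fixed-point characterization above survives this closure. Second, and more seriously, the naive formulas above quantify existentially over $w\in\mathrm{WO}$, and since $\mathrm{WO}$ is properly coanalytic this a priori yields only a $\Sigma^1_2$ bound rather than the required analytic one. Collapsing this complexity is exactly the delicate point in Kechris' proof of Theorem \ref{thm:BDerivatives}: one arranges the witnessing well-order to be obtained boundedly (effectively) from $B$, so that it may be taken $\Delta^1_1$ in the parameters, which eliminates the outer coanalytic quantifier and leaves genuinely analytic relations. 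I expect the dualization of this boundedness argument, together with the verification that the closure-at-limits does not disrupt it, to be the heart of the proof, while the remaining bookkeeping mirrors \cite[Theorem 34.10]{Kechris}.
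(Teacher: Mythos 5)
Your first half --- that $\C$ is coanalytic --- is correct and is essentially the paper's argument: you characterize $\E^{\infty}(A)$ as the least closed $\E$-fixed set containing $A$, so that $A\notin\C$ iff there exists $F\in K(X)$ with $A\subseteq F$, $F\neq X$, $\E(F)=F$, a Borel condition in $(A,F)$ whose projection is analytic. The second half, however, has a genuine gap, and you have located it yourself: your relations $P,Q$ are built from an existential quantifier over codes $w\in\mathrm{WO}$ for well-orders, which a priori gives only $\Sigma^1_2$ sets, and you defer the repair (``arrange the witnessing well-order to be obtained boundedly\,/\,$\Delta^1_1$ from $B$'') to an argument you do not carry out. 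That deferred step is not a routine dualization of a boundedness lemma --- it is the entire content of the theorem, and the classical proof (both in Kechris and in this paper) does not proceed by any effectiveness or boundedness argument at all. A secondary symptom of the same problem is your iteration map $G\colon\mathrm{WO}\times K(X)\to K(X)$: since $\mathrm{WO}$ is properly coanalytic, ``$G$ is Borel'' has no straightforward meaning, and transfinite recursion along arbitrary codes cannot be packaged as a single Borel function on a Polish domain.

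The actual resolution is to quantify over the \emph{closed} set $\lo$ of linear-order codes rather than over $\wf$, and to replace the semantic condition ``$G(w,A)=\E^{\mathrm{ot}(w)}(A)$'' by the existence of a witness $h\in K(X)^{\N}$ (a Polish space) satisfying the Borel conditions $h(0)=A$, $h(m)\neq X$ for $m\in D^*(x)$, and $\overline{\cup_{n\lex m}\E(h(n))}\subseteq h(m)$ for $m\in D^*(x)\setminus\{0\}$ (plus $\overline{\cup_{m\in D^*(x)}\E(h(m))}=X$ for the relation $\cS$). Projecting out $h$ then gives genuinely analytic relations $\cR,\cS\subseteq\lo\times K(X)$, and $P,Q$ are obtained by one more projection over $x\in\lo$. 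The nontrivial mathematical content --- which your proposal is missing --- is the converse verification of condition (R): if $A\in\C\setminus\{X\}$ and some $x\in\lo$ admits such a witness $h$, then $x$ is forced to be a well-order with $|x|^*\le|A|_{\E}$. This is proved by exhibiting an order-preserving map $f$ from $\lex$ into the ordinal $|A|_{\E}$, namely $f(m)=$ the least $\alpha<|A|_{\E}$ with $\E^{\alpha+1}(A)\nsubseteq\overline{\cup_{n\lex m}\E(h(n))}$ (such an $\alpha$ exists precisely because $\E^{\infty}(A)=X$ while $h(m)\neq X$), together with the monotonicity of $\E$ to show $m\lex p$ implies $f(m)<f(p)$. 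Without this replacement of $\wf$ by $\lo$ and the accompanying order-preserving-map argument, your $P$ and $Q$ are not analytic and the proof does not close.
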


Before proving the theorem, let us show a specific instance of a Borel expansion, the map $\Gamma$.



Let $X$ be a compact metrizable space and $\Gamma\colon K(X\times X) \to K (X\times X)$ the map defined in Section \ref{sec:gamma} ($\Gamma(E)=\overline{E^+}$). It is clear that $\Gamma$ is an expansion. 

Before proving that $\Gamma$ is a Borel map, we present the following basic lemma. 
\begin{lemma}\label{BorelLimit}
Let $X$ be a compact metrizable space, and $\varphi_n: K(X) \rightarrow K(X)$ a Borel function for every $n \in \N$. The map $\varphi:K(X) \rightarrow K(X)$, defined by
\[ \varphi(A) := \overline{\cup_{n=1}^{\infty} \varphi_n (A)},
\]
 is Borel.
\end{lemma}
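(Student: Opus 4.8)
The plan is to verify that $\varphi$ is Borel by checking preimages of a generating family of open subsets of $K(X)$. Recall that the Vietoris topology on $K(X)$ is generated by the subbasic sets $\{K \in K(X) : K \cap U \neq \emptyset\}$ and $\{K \in K(X) : K \subseteq U\}$, with $U$ ranging over the open subsets of $X$; since $X$ is second countable one may let $U$ range over a countable family, and every Vietoris-open set is a countable union of finite intersections of such sets. Hence it suffices to show that for every open $U \subseteq X$ both
\[
\{A : \varphi(A)\cap U \neq \emptyset\}\quad\text{and}\quad \{A : \varphi(A)\subseteq U\}
\]
are Borel, and then $\varphi$ will be Borel.

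First I would dispose of the hitting case. For open $U$ the closure is harmless, because a point of $\overline{S}$ lying in the open set $U$ forces $S$ itself to meet $U$; thus $\varphi(A) \cap U \neq \emptyset$ iff $\bigl(\bigcup_n \varphi_n(A)\bigr) \cap U \neq \emptyset$ iff $\varphi_n(A)\cap U \neq \emptyset$ for some $n$. Each set $\{A : \varphi_n(A) \cap U \neq \emptyset\}$ is the $\varphi_n$-preimage of the Vietoris-open hitting set, hence Borel since $\varphi_n$ is Borel, and therefore $\{A : \varphi(A)\cap U\neq\emptyset\}=\bigcup_n \{A : \varphi_n(A)\cap U\neq\emptyset\}$ is Borel.

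The hard part will be the containment case, where the closure genuinely matters, since $\overline{S}\subseteq U$ is strictly stronger than $S \subseteq U$. To get around this I would exhaust $U$ from inside by closed sets: put $G_m = \{x : d(x, X\setminus U) > 1/m\}$, which are open, increase to $U$, and satisfy $\overline{G_m}\subseteq U$. Using compactness of $\varphi(A)$ together with the increasing open cover $U=\bigcup_m G_m$, one shows $\varphi(A)\subseteq U$ iff $\varphi(A)\subseteq \overline{G_m}$ for some $m$. The key observation is that, because $\overline{G_m}$ is \emph{closed}, containment of the closure $\varphi(A)=\overline{\bigcup_n \varphi_n(A)}$ in $\overline{G_m}$ is equivalent to containment of the union, i.e.\ to $\varphi_n(A)\subseteq \overline{G_m}$ for all $n$. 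This yields
\[
\{A : \varphi(A)\subseteq U\} = \bigcup_{m}\bigcap_{n}\{A : \varphi_n(A)\subseteq \overline{G_m}\}.
\]

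To finish, I would note that since $\overline{G_m}$ is closed, the set $\{K : K \subseteq \overline{G_m}\}$ is the complement of the Vietoris-open hitting set of $X\setminus\overline{G_m}$, hence closed (in particular Borel) in $K(X)$. Therefore each inner set $\{A : \varphi_n(A)\subseteq \overline{G_m}\}$ is a $\varphi_n$-preimage of a Borel set and so is Borel, and the displayed countable union of countable intersections is Borel. The only real subtlety is this containment step; the whole point is the passage to the closed approximants $\overline{G_m}$, which absorbs the closure appearing in the definition of $\varphi$ and reduces everything to the Borel maps $\varphi_n$.
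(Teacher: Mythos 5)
Your proof is correct, but it takes a genuinely different route from the paper's. The paper's argument is a two-line soft argument: the finite partial unions $A\mapsto\bigcup_{i=1}^{n}\varphi_i(A)$ are Borel because the $n$-fold union map $K(X)^n\to K(X)$ is continuous, and then $\varphi$ is the pointwise limit in the Hausdorff metric of this increasing sequence of Borel maps, hence Borel (pointwise limits of Borel functions into a metrizable space are Borel). The one fact the paper leaves implicit is that the increasing partial unions really do converge in Hausdorff distance to $\overline{\bigcup_n\varphi_n(A)}$, which follows from total boundedness of the compact limit set. You instead verify Borelness by hand on a subbasis for the Vietoris topology: the hitting sets $\{K:K\cap U\neq\emptyset\}$ are easy because the closure is invisible to an open test set, and for the containment sets $\{K:K\subseteq U\}$ you absorb the closure by exhausting $U$ from inside with the closed sets $\overline{G_m}$ and using compactness of $\varphi(A)$ to reduce to $\varphi_n(A)\subseteq\overline{G_m}$ for all $n$. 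Your identity $\{A:\varphi(A)\subseteq U\}=\bigcup_m\bigcap_n\{A:\varphi_n(A)\subseteq\overline{G_m}\}$ is correct, and each piece is a $\varphi_n$-preimage of a Borel subset of $K(X)$. What each approach buys: the paper's is shorter and cleaner but relies on the pointwise-limit theorem and the unstated convergence fact; yours is longer but entirely elementary, using only the definition of the Vietoris topology and countable Boolean operations, and it makes explicit exactly where the closure in the definition of $\varphi$ matters. The only point worth tightening in yours is the reduction to a countable subbasis (a countable basis of $X$ closed under finite unions suffices to generate the Vietoris topology); since you in fact prove Borelness of both preimages for arbitrary open $U$, this is harmless.
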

\begin{proof}
Define $\psi _n :K(X) \rightarrow K(X)^n$ by
\[ \psi _n (A) := (\varphi_1(A), \ldots, \varphi_n(A)).
\]
Then, $\psi_n$ is Borel. Moreover, as the union map is continuous, we have that, for each $n \in \N$, $A \rightarrow \cup_{i=1}^n  \varphi_i(A)$ is Borel. Now $\varphi$ is simply the pointwise limit of these maps and hence itself Borel. 
\end{proof}
\begin{proposition}\label{GammaBorel}
Let $X$ be a compact metrizable space. Then $\Gamma$
is a Borel expansion on $K(X\times X)$. 
\end{proposition}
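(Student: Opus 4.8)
The plan is to show that the map $\Gamma(E)=\overline{E^+}$ is Borel as a function from $K(X\times X)$ to $K(X\times X)$, where $E^+$ denotes the transitive closure of $E$ (the expansion and monotonicity properties having already been noted as clear). The strategy is to express $\Gamma$ as a limit of simpler Borel maps, so that Lemma~\ref{BorelLimit} applies directly. To that end, for each $n\in\N$ I would define the ``length-$n$ composition'' map $C_n\colon K(X\times X)\to K(X\times X)$ by
\[
C_n(E)=\{(u,v)\in X^2:\exists\, u=u_1,u_2,\dots,u_{n+1}=v\text{ s.t. }(u_i,u_{i+1})\in E\}.
\]
Since $E$ is symmetric and contains points along chains, one checks that $E^+=\bigcup_{n=1}^{\infty}C_n(E)$, and therefore $\Gamma(E)=\overline{E^+}=\overline{\bigcup_{n=1}^\infty C_n(E)}$. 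By Lemma~\ref{BorelLimit}, it then suffices to prove that each $C_n$ is a Borel map.

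First I would handle the base case $C_1$, the ``relational composition by one step.'' The key point is that $C_1(E)$ is the image of $E$ under a continuous operation built from the compact set $E$: one can describe $C_1(E)$ as the projection to the outer coordinates of $\{(u,w,v)\in X^3:(u,w)\in E,\ (w,v)\in E\}$. Because projections of compact sets are compact and the map $K(X^3)\to K(X^2)$ induced by a continuous projection is continuous in the Vietoris topology, and because intersection and preimage operations of this form behave continuously (or at least Borel-measurably) on hyperspaces, $C_1$ is Borel; in fact I would aim to argue it is continuous. Then for general $n$ I would proceed by induction, writing $C_{n+1}(E)=C_1\text{-type composition of }C_n(E)\text{ with }E$, i.e. composing the relation $C_n(E)$ with $E$ one more step. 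Each stage is a Borel (indeed continuous) operation on $K(X^2)$, so the finite composition $C_n$ is Borel for every $n$.

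With all $C_n$ shown Borel and the identity $\Gamma(E)=\overline{\bigcup_n C_n(E)}$ in hand, Lemma~\ref{BorelLimit} immediately yields that $\Gamma$ is Borel, completing the proof that $\Gamma$ is a Borel expansion.

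I expect the main obstacle to be the careful verification that the one-step composition map $C_1$ (and hence the relation-composition operation used in the inductive step) is genuinely continuous or Borel with respect to the Vietoris topology on the hyperspaces. The delicate issue is that relational composition involves an existential quantifier over an intermediate point $w\in X$, which corresponds to a projection; projections can fail to be continuous as set-valued maps in general, but here compactness of $X$ and of the sets involved should rescue continuity of the induced hyperspace map. I would take care to phrase the composition as (i) forming a closed subset of $X^3$ from $E$ via continuous coordinate-restriction maps on $K(X^3)$, and (ii) applying the continuous pushforward $K(X^3)\to K(X^2)$ induced by the projection onto the first and third coordinates, invoking the standard fact that a continuous map $f\colon Y\to Z$ induces a continuous map $K(Y)\to K(Z)$, $A\mapsto f(A)$, in the Vietoris topology. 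Making step (i) precise—that $E\mapsto\{(u,w,v):(u,w)\in E\ \&\ (w,v)\in E\}$ is continuous into $K(X^3)$—is where the real work lies, and I would reduce it to continuity of the ``lift'' maps $E\mapsto E\times X$ and $E\mapsto X\times E$ followed by intersection, checking that intersection is at least Borel on the relevant compact hyperspaces.
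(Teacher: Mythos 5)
Your proposal follows essentially the same route as the paper: it decomposes $\Gamma(E)=\overline{\bigcup_n C_n(E)}$ into the finite-chain maps (the paper's $\sim_n$) and then invokes Lemma~\ref{BorelLimit}. The only difference is that the paper simply asserts each $\sim_n$ is continuous while you supply a lift--intersect--project justification; your caution there is in fact warranted, since for chains of length at least two these maps are only upper semicontinuous (hence Borel) rather than continuous in the Vietoris topology, because the intermediate witness point can disappear under arbitrarily small perturbations of $E$.
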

\begin{proof}
That $\Gamma$ is an expansion is clear from the definition. Let us show that $\Gamma$ is Borel. 
Define $\sim_n: K (X \times X) \rightarrow K (X \times X)$ by 
\[\sim_n (A) :=\{(x,y):\exists x=x_0, \ldots x_n =y \mbox { such that } (x_i, x_{i+1}) \in A \ \  \forall \ 0 \le i <n\}.\] We note that $\sim_n$ is a continuous map. Using Lemma~\ref{BorelLimit}, we conclude that 
\[ \Gamma(A) = \overline{ \cup_{n=1}^{\infty} \sim_n (A)}
\]
is Borel.
\end{proof}
We now proceed to prove Theorem~\ref{BorelExp}. We follow a similar outline of the proof of \cite[Theorem~34.10]{Kechris} adapted to our situation, which involves using a different combinatorial model for coanalytic sets. 

\begin{proof}[Proof of Theorem~\ref{BorelExp}]

We first show that $\C$ is coanalytic. As $\E$ is Borel, $gr(\E)$, the graph of $E$, is also Borel. Let $\Delta$ be the diagonal of \[
\Delta=\{(A,A): A\in K(X) \setminus \{X\}\}.\] Then, $gr(\E)\cap \Delta$ is Borel. As
\[\F = \{A \in K(X): \E(A)=A  \ \&\  A \neq X\}\] is the bijective projection of the Borel set $gr(\E)\cap \Delta$, we have that $\F$ is Borel. 
Let
\[ \G = \{(A,B) \in K(X) \times K(X): B \in \F \ \ \& \ \ A   \subseteq B\}.
\]
We have that
\[
\G=\{(A,B) \in K(X) \times K(X): A   \subseteq B\} \cap K(X) \times \F.
\]
Thus $\G$ is Borel ( intersection of two Borel subsets). 
As the projection of Borel sets are analytic, and  \[A \notin \C \Longleftrightarrow  (A,B) \in \G \textit{ for some B},
\]
we have that $K(X) \setminus \C$ is analytic and consequently $\C$ is coanalytic.

Now we will show that $\varphi_{\E}|_{\C \setminus \{X\}}$ is a coanalytic rank. This is enough to conclude the result. 

We proceed to construct the analytic sets $P,Q$ as in Proposition~\ref{altpi11}. 

For $x \in \dubCan$ we let \[D^*(x) = \{m \in \N: x(m,m) =1\}\] and define $\leex$ as the relation on $D^*(x)\times D^*(x)$ that satisfies \[m \leex n \Leftrightarrow [m, n \in D^*(x) \ \&  \ x(m,n) =1]. \]
By $m <_x^* n$, we mean that $m \leex n$ and $m \neq n$. 
We let $\lo$ be the set of all $x \in \dubCan$ such that $\leex$ is a linear order, $0 \in D^*(x)$ and $0 \leex m$ for all $m \in D^*(x)$ and let $\wf$ be the set of all $x \in \lo$ such that $\leex$ is a well-ordering. 
For $x \in \wf$, we denote the order type of 
$\leex$ by $|x|^*$.
From \cite[p. 273]{Kechris} we obtain the following facts.
\begin{enumerate}
    \item $\lo$ is a closed subset of $\dubCan$,
    \item $\wf$ is a 
    coanalytic subset of $\dubCan$ and $x \mapsto |x|^*$ is a coanalytic rank on $\wf$, and
    \item $\{|x|^*: x \in \wf\}=\omega_1 \setminus \{0\}$.
\end{enumerate}

Assume that there exist analytic subsets $\cR,\cS\subset\lo \times K(X)$ which satisfy the following properties (we will prove the existence of these sets later). 
\begin{align*}
    & \{x \in \lo : (x, A) \in \mathcal{R} \} = \{x \in \wf : |x|^* \leq |A|_{\E}\} \quad \forall A \in \mathcal{C} \setminus \{X\} \tag{R} \\
    & \{A \in K(X) : (x, A) \in \mathcal{S} \} = \{A \in \C : |x|^* = |A|_{\E}\} \quad \forall x \in \wf. \tag{S}
\end{align*}

Using the sets above, we define sets $P$ and $Q$ and verify that they satisfy the desired properties.
Let 
\[ P = \{(A,B)  \in K(X) ^2 : \exists x \in \lo \textit{ such that } (x,B) \in \cR \ \& \ (x,A) \in \cS\}. \]
Then, $P $ is analytic as the property is preserved under intersections and projections. Moreover, for all $B \in \C \setminus \{X\}$ we have that 
\begin{equation}
\label{eq:P}
\{A \in \C \setminus \{X \}: |A|_{\E}\le |B|_{\E}\} = \{A \in K(X): (A, B) \in P\}.
\end{equation}
Indeed, the containment $\subseteq$  of the above equality is clear. To see the containment $\supseteq$, let $(A,B) \in P$ and $x\in \lo$ be such that $(x,B) \in \cR$ and $(x,A) \in \cS$. Applying  Condition (R) to our set $B \in \C \setminus \{X\}$, we have that $x \in \wf$ and $|x|^* \le |B|_{\E}$. As $x \in \wf$ and $(x,A) \in \cS$, by Condition (S) we have that $A \in \C $ and $|x|^* = |A|_{\E}$. As $|x|^* >0$, we have that $A \neq X$. Hence, we have that $A \in \C \setminus \{X\}$ with $|A|_{\E} \le |B|_{\E}.$

Now, let $g\colon \lo \to \lo$ be a Borel function such that $x \in \wf$ if and only if $g(x) \in \wf$  and \[|g(x)|^* = |x|^* +1 \ \ \forall x\in \wf.\]  Indeed, this may be defined as follows: for $x \in \lo$ define $g(x)$ by $g(x)(2m,2n) = x(m,n)$, $g(x)(1,1)=1$, $g(x)(2m,1) =1$ for all $m \in D^*(x)$ and $g(x)(m,n)$ equals zero elsewhere.

Now, let
\[ Q = \{(A,B)  \in K(X) ^2 : \exists x \in \lo \textit{ such that } (g(x),B) \in \cR \ \& \ (x,A) \in \cS\}. \]
As $g$ is Borel, we see that $Q $ is analytic. Moreover, for all $B \in \C \setminus \{X\}$ we have that 

\begin{equation}
\label{eq:Q}    
\{A \in \C \setminus \{X \}: |A|_{\E} < |B|_{\E}\} = \{A \in K(X): (A, B) \in Q\}.
\end{equation} 
Indeed, the containment $\subseteq$  of the above equality is clear. To see the containment $\supseteq$, let $(A,B) \in Q$ and $x\in \lo$ be such that $(x',B) \in \cR$ and $(x,A) \in \cS$. Applying  Condition (R) to our set $B \in \C \setminus \{X\}$,  we have that $g(x) \in \wf$ and $|g(x)|^* \le |B|_{\E}$. As $g(x) \in \wf$, we have that $x \in \wf$. Now, as $x \in \wf$ and  $(x,A) \in \cS$, by Condition (S) we have that $A \in \C $ and $|x|^* = |A|_{\E}$. As $|x|^* >0$, we have that $A \neq X$. Hence, we have that $A \in \C \setminus \{X\}$ and \[|A|_{\E} = |x|^* < |g(x)|^* \le |B|_{\E},\] this concludes the argument. 

Use \eqref{eq:P}, \eqref{eq:Q}, and Proposition~\ref{altpi11} to conclude that the $\E$ rank on $\C \setminus \{X\}$ is a coanalytic rank.

 It remains to prove the existence of the analytic sets $\cR,\cS$ with the required properties.

We define
\begin{align*}
  \cR = \{(x,A) \in \lo \times K(X) : \exists h \in K(X)^{\N} \text{ s.t. } \\ 
  h(0) = A,  \\ 
  \forall m \in D^*(x), \ h(m) \neq X, \\
  \forall m \in D^*(x) \setminus \{0\},\text{ } \overline{\cup_{n \lex m} \E(h(n))} \subseteq h(m)  \} 
\end{align*}

\begin{align*}
    \cS = \{(x,A) \in \lo \times K(X) : \exists h \in K(X)^{\N} \text{ s.t. } \\ 
    h(0) = A, \\ 
    \forall m \in D^*(x), \ h(m) \neq X, \\
    \forall m \in D^*(x) \setminus \{0\},\text{ } \overline{\cup_{n \lex m} \E(h(n))} \subseteq h(m), \\
    \text{and } \overline{\cup_{m \in D^*(x)} \E(h(m))} = X \}.
\end{align*}

To show that $\cR$ and $\cS$ are analytic we define
\begin{align*}
  \cR' = \{(x,A,h) \in \lo \times K(X)\times K(X)^{\N} :  h(0) = A, \\ 
  \forall m \in D^*(x), \ h(m) \neq X, \\
  \forall m \in D^*(x) \setminus \{0\},\text{ } \overline{\cup_{n \lex m} \E(h(n))} \subseteq h(m)  \} 
\end{align*}

\begin{align*}
    \cS' = \{(x,A,h) \in \lo \times K(X)\times K(X)^{\N} :  
    h(0) = A, \\ 
    \forall m \in D^*(x), \ h(m) \neq X, \\
    \forall m \in D^*(x) \setminus \{0\},\text{ } \overline{\cup_{n \lex m} \E(h(n))} \subseteq h(m), \\
    \text{and } \overline{\cup_{m \in D^*(x)} \E(h(m))} = X \}.
\end{align*}

Note that $m \in D^*(x)$ if and only if $x(m,m)=1$.
Moreover,
\begin{equation}
\label{three}
\overline{\cup_{n \lex m} \E(h(n))} \subseteq h(m)
\end{equation}
if and only if  for all $ {n \lex m}$ we have that $E(h(n)) \subseteq h(m)$. 
With these observations, one can check that  $\cR'$ is Borel and hence $\cR$ is analytic as it is the projection of $\cR'$.
Furthermore, by Proposition~\ref{BorelLimit}, we have that \[\overline {\cup _n}: K(X)^{\N} \rightarrow K(X)\] defined by $\overline {\cup _n} (A_n) = \overline {\cup _n (A_n)}$ is Borel. This verifies that the last condition in the definition of $\cS'$ ($\overline{\cup_{m \in D^*(x)} \E(h(m))} = X $) is Borel. Putting all together, we have that 
$\cS'$ is Borel and $\cS$ and is analytic.  

Now, let us observe that $\cR$ and $\cS$ satisfy Conditions (R) and (S), respectively. Let $A \in \C \setminus \{X\}$. The containment $\supseteq$ in Condition (R) follows from the definition of $\cR$. Indeed, if $x \in \wf$ and $ |x|^* \le |A|_{\E}
$, for $m \in D^*(x)$, we let $h'(m) = \E^{\alpha_m}(A)$ where $\alpha_m$ is the order type of \[\{j \in D^*(x): j <^*_x m\}.\] Then we have that $h'(0)=\E ^0(A)=A$. For every $m\in D^*(x)$ we have that $\alpha_m<|A|_{\E}$ and hence $h'(m)\neq X$. Finally, for every $m\in D^*(x)\setminus \{0\}$ the containment $$\overline{\cup_{n \lex m} \E(h'(n))} \subseteq h'(m)$$ follows from \eqref{three}.
 This implies that $(x,A)\in \cR$.

Now we will prove the containment $\subseteq$ for Condition (R). Let $(x,A)\in \cR$, and $h$ the function given by the condition of $\cR$. If $m\in D^*(x)\setminus \{0\}$, then for some $\alpha < |A|_{\E}$ we have that \[\overline{\cup_{n \lex m} \E(h(n))} \nsupseteq \E^{\alpha+1} (A).\]
This is so, for otherwise, for all $\alpha < |A|_{\E}$ we would have that 
\[ 
\E^{\alpha+1} (A) \subseteq \overline{\cup_{n \lex m} \E(h(n))} \subseteq h(m) \neq X,
\]
implying that $\E^{\infty} (A)  = \overline{ \cup_{\alpha < |A|_{\E} }\E^{\alpha+1} (A) } \neq X$ and  that $A \notin \C$. Now we define $f\colon \lex \to |A|_{\E}$. Define $f(0) =0$ and for $m \in D^*(x) \setminus \{0\}$, let 
\[ f(m) = \text{ the least } \alpha < |A|_{\E} \text{ such that } \overline{\cup_{n \lex m} \E(h(n))} \nsupseteq \E^{\alpha+1} (A). 
\]
It suffices to show that $f$ is order preserving, i.e., $m  \lex p $ implies $f(m) < f(p)$. Indeed,  this means that $x \in \wf$ and $|x|^* \le |A|_{\E}$. 

We will now prove that $f$ is order-preserving. Let $m,p\in D^*(x)\setminus \{0\}$ with $m \lex p$. Then,
\[  \E^{f(m)}(A) = \overline{ \cup _{\alpha < f(m)} \E^{\alpha+1}(A) } \subseteq \overline{\cup_{n \lex m} \E(h(n))}.
\] 
Therefore, $\E^{f(m)} (A) \subseteq h(m)$, which implies that $\E^{f(m)+1} (A) \subseteq \E(h(m))$. As $m \lex p$, we have that 
\[
\E^{f(m)+1} (A) \subseteq \E(h(m)) \subseteq \overline{\cup_{q \lex p} \E(h(q))},
\]
concluding that $f(p) \ge f(m) +1$.

To see that $\cS$ satisfies Condition (S), let $x \in \wf$. That containment $\supseteq$ holds in Condition (S) follows from the definition of $\cS$ using the function $h'$ used for condition (R). We also use $|x|^*=|A|_{\E}$ to conclude that 
\[\overline{\cup_{m \in D^*(x)} \E(h'(m))} = X.
\]

The proof of containment $\subseteq$ also uses an equivalent order preserving function $f\colon \lex \to |A|_{\E}$. To prove that $|x|^*=|A|_{\E}$ we use that 
\[
 \overline{\cup_{m \in D^*(x)} \E(h(m))} = X.
\]

\end{proof}

\begin{corollary}
\label{cor:gamma}
    Let $X$ be a compact metrizable space. The $\Gamma$ rank on $C_{\Gamma}$ is a coanalytic rank. 
\end{corollary}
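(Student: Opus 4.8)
The plan is to obtain this corollary as an immediate specialization of Theorem~\ref{BorelExp}, taking the underlying compact metrizable space to be $X\times X$ in place of $X$. First I would observe that since $X$ is compact metrizable, the product $X\times X$ is again compact metrizable, so the standing hypothesis of Theorem~\ref{BorelExp} on the ambient space is satisfied. Next, Proposition~\ref{GammaBorel} provides exactly the remaining ingredient: it asserts that $\Gamma\colon K(X\times X)\to K(X\times X)$ is a Borel expansion. Thus both hypotheses of Theorem~\ref{BorelExp} hold with $\E=\Gamma$ and ambient space $X\times X$.

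Applying Theorem~\ref{BorelExp} in this instance, the exhaustive set $C_\Gamma=\{A\in K(X\times X):\Gamma^\infty(A)=X\times X\}$ is coanalytic, and the $\Gamma$ rank $\varphi_\Gamma$ on $C_\Gamma$ is a coanalytic rank. This is precisely the assertion of the corollary, so no further work is required beyond checking that the objects named in the corollary coincide with the objects produced by the theorem.

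There is essentially no genuine obstacle here; the only thing to verify, and it is routine, is that the transfinite iteration $\Gamma^\alpha$, the rank $|A|_\Gamma$, and the exhaustive family $C_\Gamma$ as defined in Section~\ref{sec:gamma} agree with the corresponding notions $\E^\alpha$, $|A|_\E$, and $\C$ of the expansion framework when one sets $\E=\Gamma$. This agreement is immediate from unwinding the definitions, since the successor, limit, and stabilization clauses defining the $\Gamma$ rank are exactly the clauses defining the $\E$ rank for a general expansion. Hence the corollary follows at once.
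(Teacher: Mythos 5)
Your proposal is correct and is exactly the paper's argument: the paper proves the corollary by citing Proposition~\ref{GammaBorel} together with Theorem~\ref{BorelExp}, applied with ambient space $X\times X$ and $\E=\Gamma$. Your additional remarks (that $X\times X$ is compact metrizable and that the iteration, rank, and exhaustive set from Section~\ref{sec:gamma} coincide with those of the expansion framework) are the same routine identifications the paper leaves implicit.
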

\begin{proof}
    Use Proposition~\ref{GammaBorel} and Theorem \ref{BorelExp}.
\end{proof}

Let $\cP$ be a dynamical pair assignment and $\cP _X \in \cP$.
The $\cP$ \textbf{rank} refers to the function $\varphi_{\cP}\colon C(X,X)\to \omega_1$ given by $\varphi_{\cP}(T)=|\cP_X(T)|_{\Gamma}$ (see Section \ref{sec:gamma}).

For the definition of $R(\cP_X)$ see Definition \ref{def:R_p}.
\begin{theorem}\label{mainpi11}
Let $\cP$ be a dynamical pair assignment and $\cP _X \in \cP$. The $\cP$ rank, $\varphi_{\cP}$, on 
$R(\cP_X)$ is a coanalytic rank.

\end{theorem}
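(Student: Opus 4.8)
The plan is to realize the $\cP$ rank as the pullback of the $\Gamma$ rank along the Borel map $\cP_X\colon C(X,X)\to K(X^2)$ and to invoke Corollary~\ref{cor:gamma}. Two things must be checked: first, the set-theoretic identity $R(\cP_X)=\cP_X^{-1}(C_{\Gamma})$ together with $\varphi_{\cP}=\varphi_{\Gamma}\circ \cP_X$; and second, a general principle that a coanalytic rank remains a coanalytic rank after pulling back along a Borel map. Note that $C(X,X)$ and $K(X^2)$ are Polish since $X$ is compact metrizable, so the descriptive machinery applies.

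For the first point I would show that, for every $T$, the stable set $\Gamma^{\infty}(\cP_X(T))$ coincides with the smallest $T\times T$-invariant closed equivalence relation $R_T$ containing $\cP_X(T)$. One inclusion is automatic: any such $R_T$ is reflexive, transitive and closed, hence contains $\overline{\cP_X(T)^{+}\cup \Delta_X}=\Gamma(\cP_X(T))$, and a transfinite induction gives $R_T\supseteq \Gamma^{\alpha}(\cP_X(T))$ for all $\alpha$, so $R_T\supseteq \Gamma^{\infty}(\cP_X(T))$. For the reverse inclusion, recall that $\cP_X(T)$ is symmetric (so that the $\Gamma$ rank is even defined) and $T\times T$-invariant by Definition~\ref{def:dynpair}(1), and that transitive closure, topological closure, adjoining $\Delta_X$, and limit-stage closures all preserve symmetry and $T\times T$-invariance. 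Hence $\Gamma^{\infty}(\cP_X(T))$ is itself a $T\times T$-invariant closed equivalence relation containing $\cP_X(T)$, so minimality forces $R_T\subseteq \Gamma^{\infty}(\cP_X(T))$. Thus $(X,T)$ is $\cP$-realizable iff $\Gamma^{\infty}(\cP_X(T))=X^2$, i.e. iff $\cP_X(T)\in C_{\Gamma}$, which is the desired identity; and $\varphi_{\cP}(T)=|\cP_X(T)|_{\Gamma}=\varphi_{\Gamma}(\cP_X(T))$ by definition.

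For the second point I would prove the following pullback lemma: if $f\colon Z\to Y$ is a Borel map of Polish spaces, $C\subseteq Y$ is coanalytic and $\psi\colon C\to\omega_1$ is a coanalytic rank, then $\psi\circ f$ is a coanalytic rank on the coanalytic set $f^{-1}(C)$. Take analytic $P,Q\subseteq Y^2$ witnessing $\psi$ via Proposition~\ref{altpi11} and set $P'=(f\times f)^{-1}(P)$ and $Q'=(f\times f)^{-1}(Q)$; since the class of analytic sets is closed under Borel preimages, $P'$ and $Q'$ are analytic. Unwinding $(z,w)\in P'\iff (f(z),f(w))\in P$ against the defining property of $P$ applied at $y=f(w)\in C$ yields $\{z\in f^{-1}(C):\psi(f(z))\le \psi(f(w))\}=\{z:(z,w)\in P'\}$, and likewise for $Q'$ and the strict inequality. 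Proposition~\ref{altpi11} then gives the claim.

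Combining the two, I apply the pullback lemma with $f=\cP_X$ (Borel by Definition~\ref{def:dynpair}(3)), $C=C_{\Gamma}$ and $\psi=\varphi_{\Gamma}$ (a coanalytic rank by Corollary~\ref{cor:gamma}); this shows that $\varphi_{\cP}=\varphi_{\Gamma}\circ \cP_X$ is a coanalytic rank on $\cP_X^{-1}(C_{\Gamma})=R(\cP_X)$, which is the theorem. The main obstacle I anticipate is the bookkeeping in the first point—checking at every successor and limit stage that $\Gamma^{\alpha}(\cP_X(T))$ stays symmetric and $T\times T$-invariant, so that $\Gamma^{\infty}(\cP_X(T))$ really is the minimal invariant closed equivalence relation and the identity $R(\cP_X)=\cP_X^{-1}(C_{\Gamma})$ holds verbatim; once this is in place the descriptive-set-theoretic step is routine.
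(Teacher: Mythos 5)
Your proposal is correct and follows essentially the same route as the paper: identify $R(\cP_X)=\cP_X^{-1}(C_{\Gamma})$ and $\varphi_{\cP}=\varphi_{\Gamma}\circ\cP_X$, then pull back the analytic witnessing relations for the $\Gamma$ rank (Corollary~\ref{cor:gamma}) through the Borel map $\cP_X\times\cP_X$. The only difference is that you spell out the two steps the paper leaves to the reader --- the transfinite verification that $\Gamma^{\infty}(\cP_X(T))$ is the minimal invariant closed equivalence relation, and the general Borel-pullback lemma for coanalytic ranks --- which is a welcome elaboration rather than a change of method.
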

\begin{proof}
We already showed that $R(\cP_X)$ is a coanalytic subset (Proposition \ref{prop:R_p}). This can also be checked by noting that
\begin{equation}
\label{eq:1}
    \cP_X^{-1}(C_{\Gamma}) = R(\cP_X).
\end{equation}
Since $C_{\Gamma}$ is coanalytic (Theorem \ref{BorelExp}) and $\cP_X$ is Borel (by definition) we obtain that $R(\cP_X)$ is coanalytic.

Applying Corollary~\ref{cor:gamma} and the definition of coanalytic rank, there exist relations $P, Q \subseteq X^2$, with $P$ analytic and $Q$ coanalytic, such that for all $y \in C_{\Gamma}$ we have that 
\begin{align}
\label{eq:rank}
    \{x \in C_{\Gamma}: \varphi_{\Gamma} (x) \le \varphi_{\Gamma}(y)\} &= \{x \in X: (x,y) \in P\} \notag \\
    &= \{x \in X: (x,y) \in Q\}.
\end{align}

Let $P'=(\cP_X \times \cP_X)^{-1}(P)$ and $Q'=(\cP_X \times \cP_X)^{-1}(Q)$. Using these sets, one can see that $\varphi_{\cP}\colon R(\cP_X)\to \omega_1$ satisfies the definition of coanalytic rank.  Indeed, use the fact that $\cP_X$ is Borel (so $P'$ is analytic and $Q'$ coanalytic), \eqref{eq:1}, and \eqref{eq:rank}.
\end{proof}

The following result is known as the Boundedness Theorem for coanalytic ranks \cite[Theorem 35.23]{Kechris}. 
\begin{theorem}
\label{thm:bound}
Let $X$ be a Polish space, $C\subset X$ a coanalytic subset and $\varphi$ be a coanalytic rank on $C$. If $A \subseteq C$ is analytic, then C is Borel if and only if $\varphi$ is bounded on C.
\end{theorem}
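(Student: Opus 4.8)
The plan is to prove the equivalence ``$C$ Borel $\iff$ $\varphi$ bounded on $C$'' in two directions, after recording a preliminary fact and isolating the one deep input. The preliminary fact is that every initial segment of the rank is Borel: for fixed $y\in C$, Definition~\ref{pi11} gives $\{x\in C:\varphi(x)\le\varphi(y)\}=\{x\in X:(x,y)\in P\}$, which is analytic since $P$ is, and simultaneously $=\{x\in X:(x,y)\in Q\}$, which is coanalytic since $X^2\setminus Q$ is; being both, it is Borel. The deep input is the $\Sigma^1_1$-boundedness of the rank, i.e.\ that $\varphi$ is bounded on every analytic subset of $C$; this is really the content of the hypothesis ``$A\subseteq C$ analytic'', and I discuss it last.

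The reverse implication is routine. If $\varphi$ is bounded on $C$ by some $\alpha<\omega_1$, then its range is a subset of the countable ordinal $\alpha+1$, hence countable; choosing for each $\beta$ in the range a witness $y_\beta\in C$ with $\varphi(y_\beta)=\beta$, I would write $C=\bigcup_\beta\{x\in C:\varphi(x)\le\varphi(y_\beta)\}$. This is a countable union of initial segments, each Borel by the preliminary fact, so $C$ is Borel.

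For the forward implication, observe that a Borel set is in particular analytic, so $A:=C$ is an analytic subset of itself; thus it suffices to prove $\Sigma^1_1$-boundedness and apply it with $A=C$. To prove it, fix analytic $A\subseteq C$. Composing $\varphi$ with the order isomorphism from its range onto an ordinal changes neither the comparison relations nor boundedness on $A$, so I may assume $\varphi$ maps $C$ onto an initial segment of the ordinals. Let $P',Q'$ be the analytic relations of Proposition~\ref{altpi11}, so that for $x,y\in C$ one has $(x,y)\in P'\iff\varphi(x)\le\varphi(y)$ and $(x,y)\in Q'\iff\varphi(x)<\varphi(y)$. Put $D=\{x:\exists a\in A,\ (x,a)\in P'\}$, an analytic set, and define $x\prec y\iff x,y\in D\ \&\ (x,y)\in Q'$. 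On $D\subseteq C$ this relation says exactly $\varphi(x)<\varphi(y)$, so $\prec$ is analytic and well-founded, and surjectivity of $\varphi$ onto an initial segment forces the $\prec$-rank of each $x\in D$ to equal $\varphi(x)$. Hence $\sup_{x\in A}\varphi(x)\le\sup_{x\in D}\varphi(x)$ is exactly the rank of the analytic well-founded relation $\prec$.

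The hard part is the classical principle that an analytic well-founded relation has countable rank (the Kunen--Martin theorem), whose essence is the $\Sigma^1_1$-boundedness of the canonical rank on $\wf$: if $B\subseteq\wf$ is analytic, then $\sup\{|x|^*:x\in B\}<\omega_1$. I would establish this $\wf$-version directly from facts (1)--(3). If such a $B$ were unbounded, then for $y\in\lo$ one would have $y\in\wf$ if and only if there exist $x\in B$ and an order embedding of $(D^*(y),\le^*_y)$ onto an initial segment of $(D^*(x),\le^*_x)$: the forward direction uses unboundedness of $B$ to choose $x$ with $|x|^*\ge|y|^*$, and the backward direction uses that a linear order embeddable into a well-order is itself well-ordered. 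Since this existential condition is analytic in $y$, it would present $\wf$ as an analytic set, contradicting that $\wf$ is properly coanalytic. Transferring the resulting bound back through $\prec$ yields $\sup_{x\in A}\varphi(x)<\omega_1$ and completes the forward implication. The points demanding care are the normalization of $\varphi$ onto an initial segment, the verification that $\prec$ is genuinely analytic with rank reproducing $\varphi$, and the precise analytic description of ``embeds onto an initial segment''.
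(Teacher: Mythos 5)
First, a point of reference: the paper does not prove this statement at all — it is quoted as the classical Boundedness Theorem for coanalytic ranks with a citation to Kechris (Theorem 35.23), so your proposal can only be measured against the standard textbook argument. Your easy direction is correct and standard: each initial segment $\{x\in C:\varphi(x)\le\varphi(y)\}$ is simultaneously analytic (via $P$) and coanalytic (via $Q$), hence Borel, and a bounded rank makes $C$ a countable union of such segments. Your reduction of the hard direction to $\Sigma^1_1$-boundedness, the normalization of $\varphi$ onto an initial segment of the ordinals, the analytic set $D$, and the analytic well-founded relation $\prec$ whose rank function reproduces $\varphi$ on $D$ are all correct and follow the standard route.

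The gap is the final step. You reduce to the statement that an analytic well-founded relation has countable rank (Kunen--Martin), assert that its ``essence'' is the boundedness of analytic subsets of $\wf$, prove the latter, and then say the bound ``transfers back through $\prec$''. That transfer is exactly the hard content and is not supplied: boundedness on $\wf$ concerns countable well-order codes, whereas $\prec$ lives on a typically uncountable $D\subseteq X$, and for a \emph{general} analytic well-founded relation no such transfer exists — a well-founded relation of rank $\beta$ need not contain any chain of order type $\beta$ (a well-founded tree of rank $\omega$ with all branches finite already shows this), which is precisely why Kunen--Martin requires a separate tree argument on a Suslin representation. As written, your proof either silently invokes Kunen--Martin in full or leaves its key step unproved. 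The repair is available with your own tools, because your particular $\prec$ is induced by the ordinal-valued $\varphi$ and $\varphi[D]$ is an initial segment, so $(D,\prec)$ \emph{does} contain chains of every order type below $\sup\varphi[D]$. Hence, if $\varphi$ were unbounded on $A$, one would have for $w\in\lo$ that $w\in\wf$ iff there exists $g\in X^{\N}$ with $g[D^*(w)]\subseteq D$ and $g(m)\prec g(n)$ whenever $m\lex n$: an infinite $\lex$-descending sequence pushes forward to an infinite descending sequence of ordinals (so the condition forces well-foundedness), and unboundedness supplies the required $g$ for every $w\in\wf$. This is an analytic condition on $w$, so $\wf$ would be analytic, contradicting the fact that $\wf$ is properly coanalytic — a classical fact you also rely on but which is not among the facts (1)--(3) quoted in the paper. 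With that argument substituted for ``transferring the resulting bound back through $\prec$'', the proof is complete.
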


We are now prepared to prove the characterization of when $R(\cP_X)$ is Borel.
\begin{proof}[Proof of Theorem~\ref{thm:GammaBorel}]
Combine Theorem \ref{thm:bound} and Theorem \ref{mainpi11}.
\end{proof}

\bibliographystyle{plain}
\bibliography{references}

\noindent Felipe Garc\'ia-Ramos,  fgramos@secihti.mx,
{\em  Physics Institute, Universidad Aut\'onoma de San Luis Potos\'i, and Faculty of Mathematics and Computer Science, Jagiellonian University.}\\

\noindent Udayan B. Darji, ubdarj01@gmail.com,
{\em  Department of Mathematics, University of Louisville.}\\

\end{document}